\newtheorem{thm}{Theorem}[section]
\newtheorem{lemma}[thm]{Lemma}
\newtheorem{prop}[thm]{Proposition}
\newtheorem{proposition}[thm]{Proposition}
\newtheorem{claim}[thm]{Claim}
\newtheorem{corollary}[thm]{Corollary}
\theoremstyle{definition}
\newtheorem{example}[thm]{Example}
\newtheorem{remark}[thm]{Remark}
  \newtheorem{definition-remark}[thm]{Definition-Remark}
\def\geq{\geqslant}
\def\leq{\leqslant}
\def\min{\operatorname{min}}
\def\c1{\operatorname{c_1}}
\def\c2{\operatorname{c_2}}
\def\Sym{\operatorname{Sym}}
\def\PP{{\mathbb P}}
\def\P{{\mathcal P}}
\def\A{{\mathcal A}}
\def\B{{\mathcal B}}
\def\D{{\mathcal D}}
\def\G{{\mathcal G}}
\def\L{{\mathcal L}}
\def\M{{\mathcal M}}
\def\O{{\mathcal O}}
\def\I{{\mathcal J}}
\def\H{{\mathcal H}}
\def\F{{\mathcal F}}
\def\V{{\mathcal V}}
\def\FF{{\mathbb F}}
\def\f{\mathfrak{f}}
\def\c{\mathfrak{c}}
\def\R{\mathcal{R}}
\def\x{\times}                   
\def\cong{\simeq}
\def\+{\oplus}               
\def\*{\otimes}                  
\def\Aut{\operatorname{Aut}}
\def\Pic{\operatorname{Pic}}
\def\Num{\operatorname{Num}}
\begin{document}

\title[The locus of curves where the Prym--canonical map is not an embedding]{On the locus of Prym curves where the Prym--canonical map is not an embedding}

\author[C.~Ciliberto]{Ciro Ciliberto}
\address{Ciro Ciliberto, Dipartimento di Matematica, Universit\`a di Roma Tor Vergata, Via della Ricerca Scientifica, 00173 Roma, Italy}
\email{cilibert@mat.uniroma2.it}

\author[T.~Dedieu]{Thomas Dedieu}
\address{Thomas Dedieu,
Institut de Math\'ematiques de Toulouse~---~UMR5219,
Universit\'e de Toulouse~---~CNRS,
UPS IMT, F-31062 Toulouse Cedex 9, France} 
\email{thomas.dedieu@math.univ-toulouse.fr} 

\author[C.~Galati]{Concettina Galati}
\address{Concettina Galati, Dipartimento di Matematica e Informatica, Universit\`a della Calabria, via P. Bucci, cubo 31B, 87036 Arcavacata di Rende (CS), Italy}
\email{concettina.galati@unical.it}

\author[A.~L.~Knutsen]{Andreas Leopold Knutsen}
\address{Andreas Leopold Knutsen, Department of Mathematics, University of Bergen, Postboks 7800,
5020 Bergen, Norway}
\email{andreas.knutsen@math.uib.no}



 \begin{abstract}  
 We prove that the locus of Prym curves $(C,\eta)$ of genus $g \geq 5$ for which  the Prym-canonical system  $|\omega_C(\eta)|$ is base point free but the Prym--canonical map is not an embedding is irreducible and unirational of dimension $2g+1$.
 \end{abstract}

\maketitle

\section{Introduction} \label{sec:intro}

Let  $g \geq 2$  and $\R_g$ be the moduli space of {\it Prym curves}, that is, of pairs $(C,\eta)$, with $C$ a smooth complex projective genus $g$ curve and $\eta$ a non--zero $2$--torsion point of $\Pic^0(C)$. It is well-known that $\R_g$ is irreducible of dimension $3g-3$ and that the natural forgetful map $\R_g \to \M_g$, where $\M_g$ denotes the moduli space of smooth genus $g$ curves, is finite of degree $2^{2g}-1$. 
The complete linear system $|\omega_C(\eta)|$ is of dimension $g-2$ and it is base point free unless $C$ is hyperelliptic and $\eta \cong \O_C(p-q)$, with $p$ and $q$ ramification points of the $g^1_2$ (cf.\ Lemma \ref{lemma:Prym2} below).

In this note we study the locally closed  locus  $\R^0_g$ in $\R_g$ of Prym curves $(C,\eta)$ such that  the {\it Prym-canonical system}  $|\omega_C(\eta)|$ is base point free but 
the morphism $C \to \PP^{g-2}$ it defines (the so-called {\it Prym--canonical map}) is  not an embedding.  Note that $\R^0_g$ is clearly dense in $\R_g$ for
$g \leq 4$. 
Our main result is the following:

\begin{thm} \label{thm:main} 
    Let $g \geq 5$.  The locus $\R^0_g$ is irreducible and unirational of dimension $2g+1$ and lies in the tetragonal locus.
\end{thm}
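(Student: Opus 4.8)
My plan is to first reduce membership in $\R^0_g$ to a concrete Brill--Noether condition, then to recognize that condition as a tetragonal structure with two doubled fibres, and finally to parametrize the resulting family by a Hurwitz space. Write $L=\omega_C(\eta)$, so $\deg L=2g-2$ and $h^0(L)=g-1$. For a length-two subscheme $Z$ one has $h^1(L(-Z))=h^0(\eta(Z))$ because $\eta^{-1}\cong\eta$, whence Riemann--Roch gives $h^0(L(-Z))=g-3+h^0(\eta(Z))$. Thus, assuming base point freeness (which by Lemma \ref{lemma:Prym2} only excludes a hyperelliptic locus), the Prym--canonical map fails to separate $Z$ exactly when $\eta(Z)$ is effective. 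Therefore $(C,\eta)\in\R^0_g$ if and only if $\eta\cong\O_C(A-B)$ for effective divisors $A,B$ of degree $2$ with $A\not\sim B$; taking $Z=B$ shows the two points of $B$ are identified, and since $\eta(A)\cong N(-B)\cong\O_C(B)$ with $N:=\O_C(2A)$, so are the two points of $A$, so the image acquires two nodes. Because $2\eta\cong\O_C$ we have $2A\sim 2B$, and as $A\not\sim B$ forces $2A\neq 2B$ as divisors, $N$ is a degree-four bundle with $h^0(N)\geq 2$; hence $\gon(C)\leq 4$, which already proves that $\R^0_g$ lies in the tetragonal locus.

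For the dimension I will parametrize $\R^0_g$ by the Hurwitz stratum $\mathcal H$ of degree-four covers $f\colon C\to\PP^1$ having two branch points of monodromy type $(2,2)$, with $f^*(t_1)=2A$ and $f^*(t_2)=2B$, and all remaining branch points simple. Riemann--Hurwitz gives $\deg R=2g+6$, hence $2g+2$ simple branch points and $2g+4$ branch points in total, so $\dim\mathcal H=2g+4$. Postcomposition with $\Aut(\PP^1)\cong\mathrm{PGL}_2$ fixes $C$ and leaves the ramification divisors $A,B$ unchanged, hence fixes $\eta\cong\O_C(A-B)$, so the map $\mathcal H\to\R_g$, $f\mapsto(C,\eta)$, factors through $\mathcal H/\mathrm{PGL}_2$, of dimension $2g+4-3=2g+1$. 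The characterization above shows this map dominates $\R^0_g$; I will prove it is generically finite by showing that for a general $(C,\eta)$ the pencil $N$, and with it the unordered pair $\{A,B\}$, is determined up to finitely many choices --- for $g\geq 7$ because $\rho(g,1,4)<0$ makes the $g^1_4$ essentially unique, and for $g=5,6$ by a direct argument. This yields $\dim\R^0_g=2g+1$.

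Finally I will deduce irreducibility and unirationality of $\R^0_g$ from the same properties of $\mathcal H$. The stratum $\mathcal H$ is unirational because degree-four covers of $\PP^1$ admit an explicit rational description --- e.g.\ via the Casnati--Ekedahl structure theorem, whose vector-bundle and net-of-quadrics data over $\PP^1$ are rational --- and imposing the two $(2,2)$ conditions preserves unirationality; irreducibility will follow from connectedness of $\mathcal H$, established by a braid-monodromy argument showing the braid group acts transitively on the Nielsen classes of the relevant tuples of permutations in $S_4$. Since a dominant image of an irreducible unirational variety is again irreducible and unirational, $\R^0_g$ inherits both properties.

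I expect the main obstacle to lie in this last step: controlling the braid action on $4$-tuples that contain two prescribed $(2,2)$-cycles (not merely transpositions) is more delicate than the classical simply branched case, and one must simultaneously rule out that the degenerate configurations --- base points of $|N|$, a $g^1_4$ composed with an involution, a coincidence $a_1=a_2$ producing a type-$(4)$ fibre, or an outright drop in gonality --- contribute further components of the top dimension $2g+1$. Handling these boundary strata, and confirming that the general member has gonality exactly $4$ so that $\R^0_g$ is not absorbed into the trigonal locus (which has the same dimension $2g+1$), is where the real work will be.
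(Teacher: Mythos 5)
Your opening reduction is correct and is exactly the paper's Lemma \ref{lemma:Prym2}(ii): failure of separation is equivalent to $\eta\cong\O_C(A-B)$ with $A,B$ effective of degree $2$, $2A\sim 2B$, which gives the tetragonal containment. Your Hurwitz count ($2g+4$ branch points, minus $3$ for $\mathrm{PGL}_2$) also reproduces the paper's computation $\dim(\D)=2g+1$ in Proposition \ref{prop:Prym3}. The gaps are in the two properties that carry the real content of the theorem. First, unirationality: your argument rests on the principle that ``imposing the two $(2,2)$ conditions preserves unirationality,'' which is false in general --- a subvariety of a unirational (even rational) variety need not be unirational, since every projective variety embeds in some $\PP^N$. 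So cutting the Casnati--Ekedahl parameter space by the doubled-fibre conditions proves nothing by itself. The paper instead builds an explicit rational dominating family (Proposition \ref{prop:Prym}): plane curves of degree $n$ with an ordinary $(n-4)$-fold point at $p$, $\delta$ nodes, and prescribed tangencies to two lines through $p$; the nontrivial input (Claims \ref{cl:1} and \ref{cl:2}, via the Arbarello--Cornalba Theorem \ref{thm:ac}) is that these linear systems are nonempty, of expected dimension, with irreducible nodal general member. This single construction yields nonemptiness, irreducibility and unirationality simultaneously. Your alternative route to irreducibility --- transitivity of the braid action on Nielsen classes containing two $(2,2)$-cycles --- is precisely the hard, unproven step, as you yourself concede; without it you have neither irreducibility nor unirationality.

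Second, your generic-finiteness argument is wrong where it is made precise: $\rho(g,1,4)<0$ only says a \emph{general} curve of genus $g\geq 7$ has no $g^1_4$, and the standard uniqueness of the $g^1_4$ on a general \emph{tetragonal} curve does not apply, because your curves lie in a codimension-$2$ stratum of the tetragonal locus and are not general tetragonal curves. The dangerous case is exactly the bielliptic one (Corollary \ref{cor:notbir}): if $C\to E$ is bielliptic and $\eta$ is pulled back from $E$, then composing with the one-dimensional family of degree-$2$ maps $E\to\PP^1$ produces a one-dimensional family of covers with (generically four) $(2,2)$ branch points, so the fibre of $\mathcal H/\mathrm{PGL}_2\to\R_g$ over such a pair is positive-dimensional. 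The paper kills this case by combining two facts: the bielliptic sublocus of $\H_{g,4}$ has dimension $2g-1<2g+1$, and $\D$ is \emph{irreducible} because it is dominated by the irreducible rational family $\G$ --- hence the general cover in $\D$ is not bielliptic, its Prym-canonical map is birational, and the fibres are finite. Note that this finiteness argument itself leans on the irreducibility you have not established, so the missing plane-model (or braid-monodromy) input is not a peripheral technicality but the load-bearing step of the entire proof.
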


By the tetragonal locus $\R^1_{g,4}$ in $\R_g$ we mean  
the inverse image via $\R_g \to \M_g$ of the tetragonal locus
$\M^1_{g,4}$ of $\M_g$.

We also show:

\begin{prop} \label{prop:nod} 
  For general $(C,\eta) \in \R^0_g$, $g \geq 5$,   the Prym--canonical map is birational onto its image, and its image has precisely two nodes. 
\end{prop}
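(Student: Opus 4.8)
The plan is to translate the failure of the Prym--canonical map $\phi=\phi_{|\omega_C(\eta)|}\colon C\to\PP^{g-2}$ to be an embedding into a statement about effective divisors, and then to count, for the general member of the locus $\R^0_g$ (which is irreducible by Theorem \ref{thm:main}), exactly how many such failures occur.

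First I would record the cohomological dictionary. For a length--two subscheme $\xi\subset C$, Serre duality and Riemann--Roch give $h^0(\omega_C(\eta)(-\xi))=g-3+h^0(\eta(\xi))$, using $\eta\cong\eta^{-1}$. Hence $\phi$ fails to separate $\xi$ exactly when $\eta(\xi)$ is effective, i.e. when $\eta\sim\zeta-\xi$ for some effective $\zeta$ of degree $2$. In particular, two distinct points $p,q$ are identified by $\phi$ iff $\eta(p+q)$ is effective, and $\phi$ ramifies at $x$ iff $\eta(2x)$ is effective.

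The key structural observation is that the identifications come in pairs. If $D_1=p+q$ is identified, write $\eta(D_1)\sim D_2$ with $D_2$ effective of degree $2$; then $2\eta\sim 0$ forces $\eta(D_2)\sim 2\eta+D_1\sim D_1$, so the two points of $D_2$ are identified as well, and $2D_1\sim 2D_2$. Thus $|2D_1|$ is a pencil of degree $4$: this recovers $\R^0_g\subset\R^1_{g,4}$ and exhibits the tetragonal $g^1_4$ as $|2D_1|=|2D_2|$, with $2D_1$ and $2D_2$ its two ``everywhere double'' members. The natural parameters for $\R^0_g$ are therefore triples $(C,D_1,D_2)$ with $D_1,D_2$ effective of degree $2$ and $2D_1\sim 2D_2$; the count $(3g-3)+4-g=2g+1$ matches Theorem \ref{thm:main} and shows the forgetful map $(C,D_1,D_2)\mapsto(C,\O_C(D_2-D_1))$ onto $\R^0_g$ is generically finite. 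Using this parametrisation together with the generality of $(C,\eta)$, I would argue that over the general member there is a single unordered pair $\{D_1,D_2\}$, with $D_1,D_2$ reduced and disjoint; this produces exactly two points of $C$ mapped two--to--one, i.e. two candidate nodes, and no ramification (no $\xi=2x$ occurs, ruling out cusps).

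Finally I would promote the two identifications to honest transversal nodes and establish birationality. Transversality and simplicity of each node amount to $\phi$ being an immersion at the four points and to the two branches through each node spanning a plane; these fail only on proper closed conditions (where some $\eta(2x)$ becomes effective, where $h^0(\eta(D_i))$ jumps above $1$, where a further point collides, or where two branches become tangent), each of which I would show defines a subvariety of dimension $<2g+1$ and hence is avoided by the general member of the irreducible $\R^0_g$. For birationality I would exclude that $\omega_C(\eta)$ is a pullback along a map $C\to C'$ of degree $\geq 2$: such covering structures cut out a locus of dimension $<2g+1$, and in any case the general tetragonal curve underlying $\R^0_g$ has gonality exactly $4$ and is neither bielliptic nor hyperelliptic. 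The step I expect to be the main obstacle is precisely this last bookkeeping --- showing that all the ``worse than two transversal nodes'' strata and the non-birational stratum have strictly smaller dimension --- which is cleanest to carry out by combining the parametrisation by triples $(C,D_1,D_2)$ with a Brill--Noether general--position argument on the general member of $\R^0_g$.
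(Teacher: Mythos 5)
Your reduction is sound as far as it goes: the cohomological dictionary is exactly Lemma \ref{lemma:Prym2}, the pairing $D_1\leftrightarrow D_2$ with $2D_1\sim 2D_2$ is Lemma \ref{lemma:Prym2}(ii), and birationality for the general member does follow from a dimension comparison, since by Corollary \ref{cor:notbir} the non-birational locus $\R^{0,\mathrm{nb}}_g$ is irreducible of dimension $2g-2<2g+1$. The gap is in the step you yourself flag as ``the main obstacle'': you never prove that the general $(C,\eta)$ admits a \emph{single} pair $\{D_1,D_2\}$, with $D_1,D_2$ reduced and the two branches at each image point transverse. That is not bookkeeping; it is the content of the proposition. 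The serious difficulty is that, by Lemma \ref{lemma:Prym2}(ii), an identification can arise from \emph{any} $g^1_4$ on $C$, and for $5\leq g\leq 9$ the curves underlying $\R^0_g$ may carry several $g^1_4$'s --- for $g=5$ every curve has a one-dimensional family of them --- so among your bad strata is the locus where some \emph{other} $g^1_4$ has a doubled-fibre pair inducing the \emph{same} $\eta$, and you give no dimension bound for it. Your parametrization by triples $(C,D_1,D_2)$ cannot supply one: the count $(3g-3)+4-g$ is only an expected-dimension heuristic (for $g\geq 7$ the general curve has no $g^1_4$ at all, so your construction is not a fibration over $\M_g$), and a ``Brill--Noether general position'' argument is unavailable precisely because the members of $\R^0_g$ are Brill--Noether special.

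The paper's proof shows what is needed to close this. For $g\geq 10$ a tetragonal curve has a unique $g^1_4$, so all identifications come from one pencil, and Riemann's existence theorem applied to the codimension-two locus $\D\subset\H_{g,4}$ (from the proof of Proposition \ref{prop:Prym3}) shows that the general such cover has exactly two doubled fibres; transversality at the two singular points then comes for free, since tangent branches would force $h^0(\O_C(p+q+x+y))\geq 2$, hence $p+q+x+y\in |2(p+q)|$ by uniqueness of the $g^1_4$, hence $x+y\sim p+q$ and $\eta\cong\O_C$, a contradiction. For $5\leq g\leq 9$ the paper explicitly declines to carry out the refined count you propose, and instead exhibits, in every genus $g\geq 5$, an explicit element of $\R^0_g$ whose Prym--canonical model has precisely two nodes, using curves on Enriques surfaces (Example \ref{exa:enr1}); irreducibility of $\R^0_g$, semicontinuity, and the lower bound of two singular points coming from Lemma \ref{lemma:Prym2}(ii) then give the statement for the general member. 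To complete your argument you need one of these two inputs --- uniqueness of the pencil (which fails for $g\leq 9$) or an explicit two-nodal example --- or else a genuine multi-pencil dimension count that your proposal does not contain.
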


Although we believe that these results are of independent interest, our main motivation for studying the locus $\R_g^0$ is that it naturally contains pairs $(C,\eta)$ where $C$ is a smooth curve lying on an Enriques surface $S$ such that
\[ \phi(C)=\min\{E \cdot C \; | \: E \in \Pic (S), E>0, E^2=0\}=2,\]
and $\eta=\O_C(K_S)$, cf.\ Examples \ref{exa:enr1} and \ref{exa:enr2}
and Remark \ref{rem:enr}, in which case the Prym--canonical map
associated to $\eta$ is the restriction to $C$ of the map defined by
the complete linear system $|C|$ on $S$. The locus $\R_g^0$ indeed
naturally shows up in our recent work \cite{cdgk} concerning the
moduli of smooth curves lying on an Enriques surface, in which we use the
results in this note. Besides, we show in \cite{cdgk} that
$\R^0_g$ is dominated by curves on Enriques surfaces for $5 \leq g
\leq 8$. 

The paper is organized as follows. Section \ref {sec:proof} is devoted to 
recalling some  preliminary results. The irreducibility and unirationality of $\R^0_g$ is proved in \S  \ref {sec:irr}, whereas its dimension is computed in \S \ref {sec:dim}.  We conclude with the proof of Proposition \ref{prop:nod} together with the mentioned examples on Enriques surfaces.

\vspace{0.3cm} {\it Acknowledgements.} The authors thank  Gian Pietro Pirola and  Alessandro Verra for useful conversations on the  subject and acknowledge funding
from MIUR Excellence Department Project CUP E83C180 00100006 (CC),
project FOSICAV within the  EU  Horizon
2020 research and innovation programme under the Marie
Sk{\l}odowska-Curie grant agreement n.~652782 (CC, ThD),
 GNSAGA of INDAM (CC, CG), Bergen Research
Foundation (ThD, ALK) and grant n.~261756 of the Research Council of Norway (ALK).

\section{Preliminary  results} \label{sec:proof}

\subsection{A basic lemma on Prym curves}\label{ssec:prym} 

The following is an immediate consequence of  the  Riemann-Roch theorem (see also \cite[\S 0.1]{cd} or \cite[Pf. of Lemma 2.1]{LS}). We include the proof for  the reader's convenience.

\begin{lemma} \label{lemma:Prym2}
  Let $(C,\eta)$ be any Prym curve of genus $g \geq 3$. Then:\\
  \begin{inparaenum}
  \item[(i)] $p$ is a base point of $|\omega_C(\eta)|$ if and only if $|p+\eta| \neq \emptyset$. This happens if and only if $C$ is hyperelliptic and $\eta \sim \O_C(p-q)$, with $p$ and $q$ ramification points of the $g^1_2$. In particular,
$p$ and $q$ are the only  base points;\\
\item[(ii)] if $|\omega_C(\eta)|$ is base point free, then it does not separate $p$ and $q$ (possibly infinitely near) if and only if $|p+q+\eta| \neq \emptyset$. This happens if and only if $C$ has a $g^1_4$ and $\eta \sim \O_C(p+q-x-y)$,
where $2(p+q)$ and $2(x+y)$ are members of the $g^1_4$. In particular, also
$x$ and $y$ are not separated by $|\omega_C(\eta)|$.
\end{inparaenum}
\end{lemma}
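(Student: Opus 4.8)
The plan is to run everything through Riemann--Roch and Serre duality, using throughout that $\eta$ is non-trivial of degree $0$ with $\eta^{-1}\sim\eta$. First I would record that $h^0(\omega_C\otimes\eta)=g-1$: by Serre duality $h^1(\omega_C\otimes\eta)=h^0(\eta)=0$ since $\eta$ is a non-trivial degree-$0$ bundle, and then Riemann--Roch gives $h^0=(2g-2)-g+1=g-1$.

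For (i), a point $p$ is a base point of $|\omega_C(\eta)|$ exactly when $h^0(\omega_C\otimes\eta(-p))=h^0(\omega_C\otimes\eta)=g-1$. Serre duality identifies $h^0(\omega_C\otimes\eta(-p))=h^1(\eta(p))$, and since $\eta(p)$ has degree $1$, Riemann--Roch gives $h^1(\eta(p))=h^0(\eta(p))+g-2$. Hence $p$ is a base point iff $h^0(\eta(p))=1$, i.e.\ iff $\eta(p)$ is effective, i.e.\ iff $|p+\eta|\neq\emptyset$; this proves the first equivalence. If $|p+\eta|\neq\emptyset$, write $\eta\sim\O_C(q-p)$ for some point $q$; then $2\eta\sim 0$ forces $2p\sim 2q$ with $p\neq q$ (as $\eta\neq 0$), so $2p$ and $2q$ are distinct members of $|2p|$, which is therefore a $g^1_2$, and $C$ is hyperelliptic with $p,q$ ramification points. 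The converse is immediate, since for ramification points $p,q$ one has $2p\sim 2q$, so $\eta:=\O_C(q-p)$ is a non-trivial $2$-torsion class with $|p+\eta|=|q|\neq\emptyset$. To see that $p$ and $q$ are the only base points, I would determine all $r$ with $\eta(r)\sim\O_C(r+q-p)$ effective, i.e.\ with $r+q\sim p+s$ for some point $s$: if $h^0(\O_C(r+q))=1$, then $r+q=p+s$ as divisors, forcing $r=p$; if instead $h^0(\O_C(r+q))\geq 2$, then $\O_C(r+q)$ is the hyperelliptic class and $r$ is the conjugate of $q$, which equals $q$ because $q$ is a ramification point. Thus the base locus is exactly $\{p,q\}$.

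Part (ii) follows the same template. Assuming base point freeness, $|\omega_C(\eta)|$ fails to separate $p$ and $q$ (possibly infinitely near, i.e.\ with the convention $p+q=2p$ when $p=q$) precisely when $h^0(\omega_C\otimes\eta(-p-q))=h^0(\omega_C\otimes\eta)-1=g-2$. Serre duality gives $h^0(\omega_C\otimes\eta(-p-q))=h^1(\eta(p+q))$, and Riemann--Roch on the degree-$2$ bundle $\eta(p+q)$ yields $h^1(\eta(p+q))=h^0(\eta(p+q))+g-3$; hence non-separation is equivalent to $h^0(\eta(p+q))=1$, i.e.\ to $|p+q+\eta|\neq\emptyset$. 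Writing $x+y\in|p+q+\eta|$ gives $\eta\sim\O_C(x+y-p-q)\sim\O_C(p+q-x-y)$, and $2\eta\sim 0$ yields $2(p+q)\sim 2(x+y)$. These two effective divisors in the degree-$4$ class $\O_C(2(p+q))$ are distinct (else $\eta\sim 0$), so $h^0(\O_C(2(p+q)))\geq 2$, exhibiting a $g^1_4$ with the stated members. The converse is a direct check, and the symmetric roles of $\{p,q\}$ and $\{x,y\}$ (via $\eta\sim\O_C(x+y-p-q)$) give $p+q\in|x+y+\eta|\neq\emptyset$, so that $x,y$ are not separated either.

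The computations are routine Riemann--Roch; the only genuinely delicate point is the ``only base points'' assertion in (i), where I must rule out stray base points using that $q$ is fixed by the hyperelliptic involution, together with the bookkeeping of non-triviality ($p\neq q$ and $\{x,y\}\neq\{p,q\}$) needed to produce the $g^1_2$ and $g^1_4$ rather than trivial classes.
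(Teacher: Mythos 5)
Your proof is correct and takes essentially the same approach as the paper: Riemann--Roch plus Serre duality translate base points and non-separation into the effectivity of $|p+\eta|$ and $|p+q+\eta|$, and the $2$-torsion condition then produces the $g^1_2$ (resp.\ $g^1_4$); in fact you prove part (i) in full, including the ``only base points'' claim, which the paper leaves entirely to the reader. The only compressed step is your ``i.e.''\ identifying $h^0(\eta(p+q))=1$ with $h^0(\eta(p+q))>0$ in part (ii) --- the paper justifies this explicitly --- but it follows at once from your own part (i): if $h^0(\eta(p+q))\geq 2$ then $h^0(\eta(p))\geq 1$, so $p$ would be a base point of $|\omega_C(\eta)|$, contradicting base point freeness.
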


\begin{proof}
  We prove only (ii) and leave (i) to the reader. Assume that $|\omega_C(\eta)|$ is base point free. Then $p$ and $q$ are not separated by the linear system $|\omega_C(\eta)|$ if and only if 
$h^0(\omega_C(\eta)-p)=h^0(\omega_C(\eta)-p-q)$. By Riemann-Roch and Serre duality, this is equivalent to $h^0(\eta+p)+1=h^0(\eta+p+q)$. By (i), we have 
$h^0(\eta+p)=0$, whence the latter condition is $h^0(\eta+p+q)=1$.  This is equivalent to $h^0(\eta+p+q)>0$, because if $h^0(\eta+p+q)>1$, then we would have $h^0(\eta+p)>0$, a contradiction.  This proves the first assertion.

  We have $|p+q+\eta| \neq \emptyset$ if and only if $p+q+\eta \sim x+ y$, for  $x,y \in C$.
  This  implies $2(p+q) \sim 2(x+y)$, whence $C$ has a $g^1_4$ with 
$2(p+q)$ and $2(x+y)$ as its members. Conversely, if $2(p+q)$ and $2(x+y)$ are distinct members of a $g^1_4$ on $C$, then $\eta:=\O_C(p+q-x-y)$ is a $2$--torsion element of $\Pic^0(C)$ and satisfies the condition that  $|p+q+\eta| \neq \emptyset$. 
\end{proof}

The lemma says in particular that the locus in $\R_g$ of pairs $(C,\eta)$ for which the   Prym-canonical system $|\omega_C(\eta)|$ is not base-point free  
dominates the hyperelliptic locus via the forgetful map $\R_g \to \M_g$.

Recall that the tetragonal locus $\R^1_{g,4}$ is irreducible of dimension $2g+3$ if $g \geq 7$ and coincides with $\R_g$ if $g \leq 6$. Lemma \ref {lemma:Prym2} implies that $\R^0_g \subseteq \R^1_{g,4}$, thus proving the last statement in Theorem \ref{thm:main}. 

The lemma also enables us to detect the  locus  $\R^{0,\mathrm{nb}}_g$ in $\R^0_g$ where the Prym--canonical morphism is not birational onto its image:

\begin{corollary} \label{cor:notbir}
  Let $(C,\eta)$ be any Prym curve of genus $g \geq 4$ such that  the Prym-canonical system  $|\omega_C(\eta)|$ is base point free. If the Prym-canonical map is not birational onto its image, then it is of degree two onto a smooth elliptic curve. 

The locus  $\R^{0,\mathrm{nb}}_g$ is irreducible of dimension $2g-2$ and dominates the bielliptic locus in 
$\M_g$. More precisely,  $\R^{0,\mathrm{nb}}_g$  consists of pairs $(C,\eta)$, with $C$ bielliptic and $\eta:=\varphi^*\eta'$, where $\varphi :C \to E$ is a bielliptic map and 
$\eta'$ is a nontrivial $2$--torsion  element in $\Pic^0(E)$. 
\end{corollary}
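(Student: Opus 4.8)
The plan is to analyse the Stein-type factorisation of the Prym--canonical map and pin down its image by combining the minimal-degree bound for nondegenerate curves with Clifford's theorem and Riemann--Roch. Write $\phi=\phi_{|\omega_C(\eta)|}\colon C\to\PP^{g-2}$ and suppose $\phi$ has degree $d\geq 2$ onto its image. I would factor $\phi$ as $C\xrightarrow{\psi}B\xrightarrow{\nu}\PP^{g-2}$, where $\psi$ has degree $d$, $B$ is the normalisation of the image, and $\nu$ is birational onto a nondegenerate curve. Setting $L=\nu^*\O_B(1)$, one has $\psi^*L\sim\omega_C(\eta)$, so $\deg L=(2g-2)/d$; moreover, pulling back sections and using that $\psi^*$ is injective on $H^0$ together with the nondegeneracy of the image, I would show $h^0(B,L)=g-1$ and that $\nu$ is given by the complete system $|L|$. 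Since a nondegenerate curve in $\PP^{g-2}$ has degree at least $g-2$, this forces $(2g-2)/d\geq g-2$, i.e. $d\leq 2+2/(g-2)$; hence $d=2$ for all $g\geq 5$, while for $g=4$ one must still exclude the value $d=3$.

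For the main case $d=2$ one has $\deg L=g-1$ and $h^0(L)=g-1$ on the smooth curve $B$, of some genus $g'$. If $L$ were special, Clifford's theorem would give $g-2\leq (g-1)/2$, i.e. $g\leq 3$; so for $g\geq 4$ the bundle $L$ is nonspecial and Riemann--Roch yields $g'=1$. Thus $B=E$ is a smooth elliptic curve, $L$ has degree $g-1\geq 3$ and is therefore very ample, and $E$ sits in $\PP^{g-2}$ as an elliptic normal curve; the map $\psi\colon C\to E$ is a degree-two cover, so $C$ is bielliptic. This already proves the first assertion. For $g=4$, excluding $d=3$ amounts to ruling out $\omega_C(\eta)\sim 2A$ with $A$ a base-point-free $g^1_3$ and $\eta\sim 2A-\omega_C$ a nontrivial $2$-torsion class; I expect this to be the delicate point, to be settled by a direct examination of the trigonal structure on the quadric carrying the canonical model.

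To obtain the precise description of $\R^{0,\mathrm{nb}}_g$, I would use the double-cover formula $\omega_C\sim\psi^*\xi$, where $\xi\in\Pic^{g-1}(E)$ satisfies $\xi^{\otimes 2}\cong\O_E(\text{branch divisor})$. From $\omega_C(\eta)\sim\psi^*L$ one then gets $\eta\sim\psi^*(L\otimes\xi^{-1})=:\psi^*\eta'$ with $\deg\eta'=0$. Since $\psi$ is a ramified double cover, $\psi^*$ is injective on $\Pic^0$, so $\eta$ being a nontrivial $2$-torsion element forces $\eta'$ to be a nontrivial $2$-torsion element of $\Pic^0(E)$. Conversely, for any bielliptic $\psi\colon C\to E$ and any nontrivial $\eta'\in\Pic^0(E)[2]$, the class $\eta=\psi^*\eta'$ is a nontrivial $2$-torsion element (again by injectivity of $\psi^*$) and $\omega_C(\eta)\sim\psi^*(\xi\otimes\eta')$ factors through $\psi$, so the Prym--canonical map is not birational. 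This establishes the claimed set-theoretic description.

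Finally, irreducibility and the dimension count follow formally. The bielliptic data, consisting of an elliptic curve $E$ together with its $2g-2$ branch points, form an irreducible family, and modding out by $\Aut(E)$ shows the bielliptic locus in $\M_g$ is irreducible of dimension $2g-2$. Adjoining the choice of $\eta'\in\Pic^0(E)[2]\setminus\{0\}$ defines a finite cover; since the monodromy acts as $\mathrm{SL}_2(\FF_2)$, transitively on the three nontrivial $2$-torsion points, this cover is irreducible, whence $\R^{0,\mathrm{nb}}_g$ is irreducible, of the same dimension $2g-2$, and dominates the bielliptic locus via $\R_g\to\M_g$. The main obstacle is the structural step forcing the image to be an elliptic normal curve and the map to have degree exactly two, namely the combination of the minimal-degree bound with Clifford's theorem, together with the exclusion of the rational-image case when $g=4$.
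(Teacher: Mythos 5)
Most of your proposal is sound, and in several places it takes a genuinely different route from the paper. The skeleton of the first assertion is the same: from $\mu\cdot\deg C'=2g-2$ and the bound $\deg C'\geq g-2$ for a nondegenerate curve $C'\subset\PP^{g-2}$, only degree $2$ survives when $g\geq 5$, with the extra possibility of degree $3$ when $g=4$. Your treatment of the degree-$2$ case~---~computing $h^0(B,L)=g-1$ on the normalization $B$ of the image, using Clifford's theorem to force $L$ nonspecial, Riemann--Roch to get $g(B)=1$, and very ampleness of a degree-$(g-1)\geq 3$ line bundle on an elliptic curve to conclude that the image itself is a smooth elliptic normal curve~---~is a complete substitute for the step the paper dispatches with ``it is easy to see'' (or by citing Brodmann--Schenzel). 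Your derivation of $\eta=\psi^*\eta'$ from the double-cover formula $\omega_C\cong\psi^*\xi$ is a valid alternative to the paper's route through Lemma \ref{lemma:Prym2}(ii); in the converse direction, note that you need the \emph{complete} system $|\omega_C(\eta)|$ to be pulled back from $|\xi\otimes\eta'|$ (a line bundle being a pullback does not by itself make the map of the complete system factor), but this follows at once from $h^0(E,\xi\otimes\eta')=g-1=h^0(\omega_C(\eta))$ and injectivity of $\psi^*$ on sections. For irreducibility, your monodromy argument ($\mathrm{SL}_2(\FF_2)$ acting transitively on the nonzero $2$-torsion) replaces the paper's fibration of $\R^{0,\mathrm{nb}}_g$ over $\R_1$ together with its explicit proof, via the Legendre family, that $\R_1$ is irreducible; both work. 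Your dimension count, like the paper's, implicitly uses that a bielliptic curve admits only finitely many bielliptic maps (the paper cites Ballico--Del Centina for this), which you should make explicit.

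The genuine gap is the case $g=4$, degree $3$, which you explicitly leave unresolved. Since the statement is asserted for all $g\geq 4$, this case must be excluded, and the paper does so as follows. If the Prym-canonical map were $3:1$ onto a conic, then $\omega_C(\eta)\cong 2\L$ with $|\L|$ a $g^1_3$, hence $4\L\sim 2\omega_C$ because $\eta$ is $2$-torsion. On the canonical model $C\subset\PP^3$ of a genus-$4$ curve, every $g^1_3$ is cut out by a ruling of the unique quadric $Q$ containing $C$. If $Q$ were smooth, its two rulings would cut out $g^1_3$s $\L_1,\L_2$ with $\omega_C\cong\L_1+\L_2$, and $4\L\sim 2\omega_C$ would force $2\L_1\sim 2\L_2$, i.e.\ $\O_Q(2,-2)|_C\cong\O_C$, which is impossible because the restriction $\Pic(Q)\to\Pic(C)$ is injective (this is the content of the paper's remark that $|2\omega_C|$ is cut out by quadrics on the canonical image). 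Hence $Q$ is a cone, so $|\L|$ is the \emph{unique} $g^1_3$ and $2\L\cong\omega_C$, giving $\eta\cong\O_C$, a contradiction. With this step inserted your proof covers the full range $g\geq 4$; without it, it establishes the corollary only for $g\geq 5$.
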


\begin{proof}
   Let $(C,\eta)$ be as in the statement. Denote by $C'$ the image
of the Prym-canonical morphism $\varphi:C \to \PP^{g-2}$.  Let $\mu$ be the degree of $\varphi$ and $d$ the degree of $C'$. Then $d\mu=2g-2$ and, since $C'$ is non--degenerate in $\PP^{g-2}$, we must have $d \geq g-2$.  Since $g \geq 4$, then $2\leq \mu\leq 3$; moreover  $\mu=3$ implies  that  $g=4$ and $\varphi$ maps $C$ three-to-one to a conic. The latter case cannot happen:  indeed, we would have 
$\omega_C(\eta)=2\L$, where $|\L|$ is a $g^1_3$. Then $4\L= 2\omega_C$. Since $|2\omega_C|$ is cut out by quadrics on the canonical image of $C$ in $\PP^3$, it follows that the only quadric containing the canonical model is a cone. Then  $|\L|$ is the unique $g^1_3$ on $C$ and $2\L=\omega_C$, thus $\eta$ is trivial, a contradiction.
 
Hence $\mu=2$, and then $d=g-1$, so that $C'$ is a {\it curve of almost minimal degree}. It is easy to see, using the fact that $|\omega_C(\eta)|$ is complete, that $C'$ is a smooth elliptic curve (alternatively, apply \cite[Thm.~1.2]{bs}). Hence $C$ is bielliptic and any pair of points $p$ and $q$ identified by $\varphi$ satisfy
$p+q \sim \varphi^*(r)$ for a point  $r \in C'$. Thus $2p+2q \sim \varphi^*(2r)$ is a $g^1_4$. By Lemma \ref{lemma:Prym2}(ii) we have $\eta \sim \O_C(p+q-x-y)$, where also $\varphi(x) =\varphi(y)$, whence $x+y \sim \varphi^*(z)$, for a $z \in
C'$. Hence, again by Lemma \ref{lemma:Prym2}(ii), we have
$\eta \sim p+q-x-y \sim \varphi^*(r-z)$ and $r-z$ is a nontrivial  $2$--torsion element in $\Pic^0(C')$, because $\varphi^*:\Pic^0(C')\to \Pic^0(C)$ is injective.  

Conversely, if $C$ is a bielliptic curve, it admits at most finitely
many double covers $\varphi:C \to E$ onto an elliptic curve (cf.\ e.g.,
\cite{bd};   in fact,
for $g \geq 6$, it admits a unique such map), and
for any such $\varphi$ and any nontrivial $2$--torsion element $\eta'$
in $\Pic^0(E)$, we have $\eta' \sim r-z$, for $r, z \in E$. Letting
$\varphi^*(r)=p+q$ and $\varphi^*(z)=x+y$, we see that $2(p+q) \sim
2(x+y)$ and $\eta =\varphi^*\eta'$ satisfies the conditions of Lemma
\ref{lemma:Prym2}(ii). 

 We have therefore proved that $\R^{0,\mathrm{nb}}_g$  consists of pairs $(C,\eta)$, with $C$ bielliptic and $\eta:=\varphi^*\eta'$, where $\varphi:C \to E$ is a bielliptic map and 
$\eta'$ is a nontrivial $2$--torsion  element in $\Pic^0(E)$. 

 The statement about the dimension of $\R^{0,\mathrm{nb}}_g$ follows since the bielliptic locus has dimension $2g-2$. To prove its irreducibility, consider the  map $f: \R^{0,\mathrm{nb}}_g \to \R_1$ associating to $(C,\eta)$ the pair $(E,\eta')$ as above.  
 We study the fibres of this map. Consider the following obvious cartesian diagram defining $\H$, where $U\subset  \Sym^{2g-2}(E)$ is the open subset consisting of reduced divisors:
 \[
 \xymatrix{
 \mathcal H \ar[d] \ar[r] & \Pic^{g-1}(E) \ar[d]^{\otimes 2}\\
 U \ar[r] &\Pic^{2g-2}(E)
 }
\]
By Riemann's existence theorem, $\mathcal H/\Aut(E)$ is in one-to-one correspondence with the two-to-one covers of $E$ branched  at $2g-2$ points. Then the fibre of $f$  over $(E,\eta')$ is
isomorphic to $\mathcal H/\Aut(E)$ by what we said above. Now note that $\mathcal H$ is irreducible, since it fibres over (an open subset of) $\Pic^{g-1}(E)$ with fibres that are projective spaces of dimension $2g-3$. Hence also $\mathcal H/\Aut(E)$ is irreducible.

The irreducibility of $\R^{0,\mathrm{nb}}_g$ now follows from the 
irreducibility of $\R_1$. Actually  $\R_1$ is irreducible and rational.
To see this consider the irreducible family of elliptic curves 
$y^2=x(x-1)(x-\lambda)$, where $\lambda\in\mathbb C\setminus{\{0,1\}}$. 
The three non--trivial points of order two of the fibre $\mathcal 
C_\lambda$ over $\lambda$ may be identified with the points $(0,0), 
(1,0)$ and $(\lambda,0)$. Moreover, the $j$-invariant of the fibres
defines a six-to-one map $j:\mathbb C\setminus{\{0,1\}}\to \mathcal 
M_1$. Now consider on this family the
two sections defined by the points $(0,0), (1,0)$ which stay fixed as 
$\lambda$ varies. It is an exercise to prove that the irreducible family 
of two-marked elliptic curves we obtain in this way is isomorphic to the 
moduli space of pairs
$(C, (\eta_1,\eta_2))$ where $C$ is a smooth elliptic curve and 
$(\eta_1,\eta_2)$ is an ordered pair of  distinct non--trivial
$2$--torsion  
points of $\rm{Pic}^0(C)$. This moduli space is, in turn, isomorphic to 
the moduli space $\mathcal M_1^{(2)}$ of elliptic curves with a 
level $2$ structure \cite[Ex. 2.2.1]{hm}. Finally $\mathcal 
M_1^{(2)}\simeq \mathbb C\setminus{\{0,1\}}$
maps two-to-one dominantly to $\mathcal R_1$, 
via the map $(C, (\eta_1,\eta_2)) \mapsto (C, (\eta_1+\eta_2))$.
This proves the statement.
\end{proof}

\subsection{A result on linear systems on rational surfaces}\label{ssec:lin} We will need the following:

\begin{thm}[cf.\ {\cite[Cor.~(4.6)]{ac}}] 
\label{thm:ac}  Let $X$ be a smooth projective rational surface and $\delta$ a non--negative integer. Let $\L$ be a complete linear system on $X$ such that:\\
\begin{inparaenum}
\item [(i)] the general curve in $\L$ is smooth and irreducible;\\
\item [(ii)]   the genus $p_a(\L)$ of the general curve in $\L$ satisfies $p_a(\L)  \geq \delta$;\\
\item [(iii)] $\dim(\L)>3\delta$;\\
\item [(iv)] if $p_1,\ldots, p_\delta$ are general points of $X$, there is an element $C$ of $\L$ singular at $p_1,\ldots, p_\delta$ such that for each irreducible  component $C'$ of $C$ one has $K_X\cdot C'<0$. 
\end{inparaenum}

Then, if $p_1,\ldots, p_\delta$ are general points of $X$ and $\L(p^2_1,\ldots, p^2_\delta)$ is the subsystem of $\L$ formed by the curves singular at  $p_1,\ldots, p_\delta$, one has:\\
\begin{inparaenum}
\item [(a)] the general curve in $\L(p^2_1,\ldots, p^2_\delta)$ is irreducible, has nodes at $p_1,\ldots, p_\delta$ and no other singularity;\\
\item [(b)] $\dim(\L(p^2_1,\ldots, p^2_\delta))=\dim(\L)-3\delta$.
\end{inparaenum}
\end{thm}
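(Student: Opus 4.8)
The plan is to convert both assertions into a single cohomological vanishing and then to establish that vanishing by an inductive smoothing argument whose crucial input is the test curve furnished by hypothesis (iv).

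First I would reinterpret the subsystem scheme-theoretically. Fixing $C \in \L$ and writing $Z = Z_1 + \cdots + Z_\delta$ for the length-$3\delta$ subscheme with $Z_i$ defined by $\mathfrak{m}_{p_i}^2$, one has $\L(p_1^2,\ldots,p_\delta^2) = \PP H^0(X, \mathcal{I}_Z \otimes \O_X(C))$. The restriction sequence
\[ 0 \to \mathcal{I}_Z \otimes \O_X(C) \to \O_X(C) \to \O_Z \to 0, \]
together with $\length Z = 3\delta$, yields at once the inequality $\dim \L(p_1^2,\ldots,p_\delta^2) \geq \dim \L - 3\delta$, while the cokernel of the evaluation $H^0(\O_X(C)) \to H^0(\O_Z)$ injects into $H^1(X, \mathcal{I}_Z \otimes \O_X(C))$. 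Hence the vanishing of this $H^1$ for general $p_1,\ldots,p_\delta$ forces equality, which is (b), and it simultaneously makes the three conditions imposed by each $Z_i$ independent~--- the mechanism that turns the prescribed double points into honest nodes in (a) rather than worse singularities.

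Next I would prove the vanishing by induction on $\delta$ (the case $\delta = 0$ being hypothesis (i)), releasing one node at a time and phrasing the induction through the Severi-type variety $V \subseteq \L$ of curves with $\delta$ nodes at \emph{unassigned} points, of expected dimension $\dim\L - \delta$, equipped with its node map $\nu \colon V \to \Sym^{\delta}(X)$. A standard deformation computation identifies the obstruction to the smoothness of $V$ and to the dominance of $\nu$ with an equisingular $H^1$ which, on our rational surface, is governed component by component by groups of the form $H^1(X, \O_X(C'))$. Here hypothesis (iv) plays two roles. Its curve is singular at $\delta$ \emph{general} points, which forces $\nu$ to be dominant; the general fibre is then nonempty of dimension $\dim V - 2\delta = \dim\L - 3\delta$ (hypothesis (ii), ensuring $p_a(\L)-\delta \geq 0$, being what makes the $\delta$-nodal locus nonempty in the first place). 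And the hypothesis $K_X \cdot C' < 0$ on each component produces the needed vanishing: on a rational surface $H^1(\O_X) = H^2(\O_X) = 0$, so $H^1(X, \O_X(C')) \cong H^1(C', \O_{C'}(C'))$, which by adjunction and Serre duality on the integral curve $C'$ is dual to $H^0(C', \O_{C'}(K_X))$, and the latter vanishes because $\O_{C'}(K_X)$ has negative degree $K_X \cdot C' < 0$.

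The main obstacle I anticipate is the assembly step: transporting the component-wise vanishing $H^1(X, \O_X(C')) = 0$ into the global vanishing of $H^1(X, \mathcal{I}_Z \otimes \O_X(C))$ for the reducible, a priori non-nodal test curve of (iv), and then propagating it by semicontinuity to the general member of $V$. This requires careful Mayer--Vietoris/normalization bookkeeping of how the components meet, since the double points $Z_i$ may be distributed among the components or sit at their mutual intersections, and one must check that the assembled cohomology genuinely vanishes rather than merely being bounded. A secondary delicate point is the irreducibility in (a): the general member of $\L(p_1^2,\ldots,p_\delta^2)$ must be recognised as a non-separating $\delta$-nodal degeneration of the irreducible general member of $\L$ provided by (i); combined with the positivity $\dim\L > 3\delta$ of (iii), which keeps the subsystem free of a fixed component, a Bertini argument off the length-$3\delta$ base scheme $Z$ then yields irreducibility and excludes any singularity beyond the prescribed nodes.
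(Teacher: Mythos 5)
Your route differs fundamentally from the paper's, and it has genuine gaps. The paper does not reprove part (a) at all---it cites \cite{ac} for it---and proves only (b), by a short dimension count that \emph{uses} (a): writing $\dim(\L(p_1^2,\ldots,p_\delta^2))=\dim(\L)-3\delta+\varepsilon$ with $\varepsilon\geq 0$, the family $\F=\bigcup_{p_1,\ldots,p_\delta}\L(p_1^2,\ldots,p_\delta^2)$ obtained by varying the points has dimension $\dim(\L)-\delta+\varepsilon$; by (a) its general member $C$ is nodal exactly at $p_1,\ldots,p_\delta$ and nowhere else, so the Zariski tangent space to $\F$ at $C$ is the system $\L(p_1,\ldots,p_\delta)$ of curves through the nodes, of dimension $\dim(\L)-\delta$ since the points are general; hence $\varepsilon=0$. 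You instead set out to prove (a) and (b) together, i.e.\ to re-derive the quoted result of Arbarello--Cornalba from scratch, and your sketch breaks down precisely at the hard points.

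Concretely: (1) the vanishing $H^1(X,\mathcal{I}_Z\otimes\O_X(C))=0$ gives exactly (b) (surjectivity of evaluation, hence independence of the $3\delta$ conditions), but it does \emph{not} ``turn the prescribed double points into honest nodes'': it says nothing about whether the general member of $\L(p_1^2,\ldots,p_\delta^2)$ has an ordinary node rather than a cusp or tacnode at each $p_i$, nothing about singularities away from the $p_i$, and nothing about irreducibility---that is the entire content of (a), and it requires either vanishings for larger punctual schemes (to bound the loci of worse singularities) or the degeneration analysis carried out in \cite{ac}. (2) Your closing Bertini argument cannot repair this: Bertini controls the general member only \emph{away} from the base locus, and the $p_i$ lie in the base locus, which is exactly where control is needed; moreover hypothesis (iii) does not by itself exclude a fixed component (positive dimension is compatible with one), nor does Bertini give irreducibility for a system with base points. (3) The step you yourself flag as the main obstacle is left unresolved and is where the real work lies: the curve supplied by (iv) is a priori reducible and non-nodal, so it need not lie in your Severi variety $V$ and does not directly force the node map $\nu$ to dominate; and the obstruction space at a nodal curve $C$ with nodes $N$ is $H^1(X,\mathcal{I}_N\otimes\O_X(C))$, whose vanishing when every component satisfies $K_X\cdot C'<0$ is itself a nontrivial theorem (Tannenbaum, Arbarello--Cornalba), not a formal consequence of the component-wise vanishing $H^1(X,\O_X(C'))=0$ that you establish. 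For what it is worth, your cohomological reduction of (b) is correct, and is viable here because $H^1(X,\O_X(C))=0$ follows from (i), (iv) and rationality of $X$; but once (a) is taken as a citation, as the paper does, the tangent-space count above yields (b) with no cohomology at all.
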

\begin{proof} The proof of (a) is in \cite {ac}. As for (b), one has $\dim(\L(p^2_1,\ldots, p^2_\delta))=\dim(\L)-3\delta+\varepsilon$, with $\varepsilon\geq 0$. Consider the locally closed family of curves in $\L$ given by
\[
\F:=\bigcup_{p_1,\ldots, p_\delta} \L(p^2_1,\ldots, p_\delta^2),
\]
where the union is made by varying $p_1,\ldots, p_\delta$ among all the $\delta$--tuples   of sufficiently general points of $X$. Of course 
\[
\dim(\F)=2\delta+\dim(\L(p^2_1,\ldots, p^2_\delta))=\dim(\L)-\delta+\varepsilon.
\]
On the other hand, if $C$ is a general element in $\F$, it has nodes at
$p_1,\ldots, p_\delta$ and no other singularity by (a), hence the
Zariski tangent space to $\F$ at $C$ is the linear system
$\L(p_1,\ldots, p_\delta)$ of curves in $\L$ containing
$p_1,\ldots, p_\delta$. Since $p_1,\ldots, p_\delta$ are general, we
have $\dim(\L(p_1,\ldots, p_\delta))=\dim (\L)-\delta$, which proves
that $\varepsilon=0$. \end{proof}

\section{Irreducibility and unirationality of $\R^0_g$}\label{sec:irr}

In this section we prove a first part of Theorem \ref{thm:main}, namely:

\begin{proposition} \label{prop:Prym}
  The locus $\R^0_g$ is irreducible and unirational  for $g \geq 5$.
\end{proposition}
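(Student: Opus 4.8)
The plan is to turn Lemma \ref{lemma:Prym2}(ii) into an explicit parametrisation. A pair $(C,\eta)$ lies in $\R^0_g$ exactly when $C$ carries a pencil $|A|=g^1_4$ admitting two members of the form $2(p+q)$ and $2(x+y)$ --- equivalently, the degree-$4$ morphism $\pi_A\colon C\to\PP^1$ has two fibres consisting of two double points each --- and $\eta\sim\O_C(p+q-x-y)$. Thus the whole Prym structure is encoded by the tetragonal cover together with the two distinguished fibres, and I would first fix coordinates so that these fibres lie over $0,\infty\in\PP^1$. By Corollary \ref{cor:notbir} the bielliptic sublocus $\R^{0,\mathrm{nb}}_g$, where $\pi_A$ factors through an elliptic curve, has dimension $2g-2$ and is therefore a proper closed subset; it suffices to parametrise the dense complementary part and to note that $\R^{0,\mathrm{nb}}_g$ lies in its closure.

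First I would realise these covers on a rational surface, in order to invoke Theorem \ref{thm:ac}. Concretely, take a rational ruled surface $X\to\PP^1$ (a Hirzebruch surface, or a suitable blow-up thereof) whose ruling $|F|$ meets $C$ in the $g^1_4$, i.e. $F\cdot C=4$, and look at curves in a complete system $\L$ on $X$ that are bitangent to the two fixed fibres $F_0,F_\infty$; the two bitangency divisors are precisely the required members $2(p+q)$ and $2(x+y)$ of the induced $g^1_4$, so that the $2$-torsion class $\eta$ is produced automatically. The Prym--canonical map then glues $p$ with $q$ and $x$ with $y$ by Lemma \ref{lemma:Prym2}(ii), so its image acquires two nodes; this matches an application of Theorem \ref{thm:ac} with $\delta=2$, the two general nodes being exactly these two identified pairs. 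Theorem \ref{thm:ac}(a) guarantees that the general member of the relevant nodal subsystem is irreducible with exactly the two prescribed nodes and no further singularity, so that its normalisation is a smooth genus-$g$ curve with the desired structure.

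Since $X$, its ruling and the vector space of sections defining $\L$ are all rational, the total space $\mathcal P$ of this construction --- the surface together with the choice of the two bitangent fibres and of the curve in $\L$ satisfying the tangency conditions --- is an irreducible rational variety. The assignment $(X,F_0,F_\infty,C)\mapsto(\widetilde C,\eta)$, with $\widetilde C$ the normalisation, defines a rational map $\mathcal P\dashrightarrow\R_g$, and I would check that it is dominant onto $\R^0_g$ by verifying that the general curve it produces is smooth, non-hyperelliptic, of genus $g$, with $\eta\neq 0$ and $|\omega_C(\eta)|$ base point free (using Lemma \ref{lemma:Prym2}(i)). Irreducibility and unirationality of $\R^0_g$ then follow at once, being the image of an irreducible unirational variety under a dominant rational map.

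The hard part will be the control of the two doubled fibres. Imposing that a curve in $\L$ be simultaneously bitangent to two fixed fibres is a non-linear condition, and one must show that it cuts out an irreducible, rational subvariety of the expected codimension, compatibly with the general nodes of Theorem \ref{thm:ac}, and that the four tangency points do not force the family to break into several components (for instance according to how $\{p,q\}$ and $\{x,y\}$ are distributed, or into the imprimitive covers producing $\R^{0,\mathrm{nb}}_g$). In Hurwitz-theoretic terms this is the transitivity of the braid monodromy on degree-$4$ covers of $\PP^1$ with two fibres of cycle type $(2,2)$ and transitive monodromy group; establishing this single-component property, together with the dominance in the previous step, is the crux on which the whole argument rests.
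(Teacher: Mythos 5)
Your overall strategy is the paper's own: realise tetragonal curves on a rational surface fibred over $\PP^1$ with the ruling cutting the $g^1_4$, impose bitangency to two fibres to produce the two doubled members $2(p+q)\sim 2(x+y)$ and hence $\eta$, and deduce irreducibility and unirationality of $\R^0_g$ from a dominant rational map out of an irreducible rational family (in the paper the surface is a blow-up of $\PP^2$, the ruling is the pencil of lines through the $(n-4)$-fold point $p$, and the two bitangent fibres are the lines $r_1,r_2$, tangent at the $q_{ij}$). However, there are two genuine problems. First, your application of Theorem \ref{thm:ac} with $\delta=2$ is a conceptual error: the pairs $(p,q)$ and $(x,y)$ glued by the Prym--canonical map are pairs of \emph{distinct smooth points} of $C$ lying on the two special fibres; they become nodes only in $\PP^{g-2}$, whereas Theorem \ref{thm:ac} imposes nodes on curves in a linear system on the rational surface $X$, at \emph{general} points of $X$. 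Moreover the number of nodes one must impose on $X$ is not $2$ but is dictated by the genus: a curve with $F\cdot C=4$ on (a blow-up of) a ruled surface has arithmetic genus fixed by its class, so to reach an arbitrary geometric genus $g=2h+\epsilon$ one must impose $\delta=h+2\epsilon$ nodes at general points, exactly as the paper does.

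Second, and decisively, the step you yourself call ``the crux'' --- that the bitangency conditions cut out an irreducible rational subfamily of the expected codimension whose general member is irreducible with no unprescribed singularities --- is precisely the content of the paper's Claims \ref{cl:1} and \ref{cl:2}, and you leave it unproved. The paper's way around your worry that bitangency is non-linear is to \emph{linearise} it: fix the tangency points $q_{ij}$ (intersection multiplicity $\geq 2$ with a fixed line at a fixed point is a linear condition on the curve), absorb these conditions into the surface by blowing up $p$, the $q_{ij}$ and the infinitely near points along the $r_i$, and only then apply Theorem \ref{thm:ac} to impose the $\delta$ general nodes; the delicate point is hypothesis (iv) of that theorem, which could fail only if the strict transforms $R_1,R_2$ of the two lines or the exceptional divisor $E$ over $p$ were fixed components of the nodal system, and this is excluded by the dimension estimate of Claim \ref{cl:1}. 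The lines and tangency points are varied afterwards, keeping the total family $\G$ rational. Your fallback --- irreducibility of the relevant Hurwitz locus via braid monodromy --- even if established, would only yield irreducibility of $\R^0_g$ (via Lemma \ref{lemma:Prym2}(ii)); it cannot yield unirationality, so it is no substitute for the surface construction. As it stands, your proposal is a correct plan whose essential steps are missing.
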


The proof is inspired by the arguments in \cite {ac} and requires some preliminary considerations. In \cite[Theorem (5.3)] {ac} the authors prove that some Hurwitz schemes
$\H_{g,d}$ are unirational. Here we focus on the case $d=4$ and recall their construction.

Fix $g=2h+\epsilon\geq 3$, with $0\leqslant \epsilon\leqslant 1$. Then set $n=h+3+\epsilon$ and 
\[
\delta={{n-1}\choose 2}-{{n-4}\choose 2}-g=h+2\epsilon.
\]
Fix now $p, p_1,\ldots, p_\delta$ general points in the projective plane and consider the linear system $\L_n(p^{n-4}, p^2_1,\ldots, p_\delta^2)$ of plane curves of degree $n$ having multiplicity at least $n-4$ at $p$ and multiplicity at least $2$ at $p_1,\ldots, p_\delta$. As an application of Theorem \ref {thm:ac}, in \cite[Cor. (4.7)]{ac} one proves that the dimension of $\L_n(p^{n-4}, p^2_1,\ldots, p_\delta^2)$  is the expected one, i.e., 
\[
\dim(\L_n(p^{n-4}, p^2_1,\ldots, p_\delta^2))={\frac {n(n+3)} 2}-{\frac {(n-4)(n-3)}2}-3\delta=2h+9-\epsilon,
\]
and the general curve $\Gamma$ in $\L_n(p^{n-4}, p^2_1,\ldots, p_\delta^2)$ is irreducible,  has an  ordinary  $(n-4)$--tuple point at $p$,
nodes at $p_1,\ldots, p_\delta$,  and no other singularity. The normalization $C$ of $\Gamma$ has genus $g$ and it has a $g^1_4$, which is the pull--back  to $C$ of the linear series cut out on $\Gamma$ by the pencil of lines through $p$. 

Consider then the locally closed family of curves
\[
\H:=\bigcup_{p_1,\ldots,p_{\delta}}  \L_n(p^{n-4}, p^2_1,\ldots, p_\delta^2),
\]
where the union is made by varying $p_1,\ldots, p_\delta$ among all the $\delta$--tuples  of sufficiently general points of the plane. Then $\H$ is clearly irreducible, rational, of dimension $\dim(\L_n(p^{n-4}, p^2_1,\ldots, p_\delta^2))+2\delta=4h+9+3\epsilon$, and in  \cite {ac} it is proved that the natural map
$\H\dasharrow \M_{g,4}^1$ is dominant, so that $\M_{g,4}^1$ is unirational.

\begin{proof} [Proof of Proposition \ref {prop:Prym}] To prove our
  result, we slightly modify the above argument from \cite {ac}. 
Let us fix $g\geq 5,n, \delta$ as above. 
Let $p, p_1,\ldots, p_\delta$ be general points in the plane.

\begin{claim}\label{cl:1} 
Consider the linear system $\L_{n-2}(p^{n-6}, p^2_1,\ldots,
p_\delta^2)$ of plane curves of degree $n-2$, having a point of
multiplicity at least $n-6$ at $p$, and singular at $p_1,\ldots,
p_\delta$. Then the dimension of $\L_{n-2}(p^{n-6}, p^2_1,\ldots,
p_\delta^2)$ is the expected one, i.e.,
\[
\dim(\L_{n-2}(p^{n-6}, p^2_1,\ldots, p_\delta^2))=\frac {(n-2)(n+1)}2 -\frac {(n-6)(n-5)}2-3\delta=2h-1-\epsilon.
\]
\end{claim}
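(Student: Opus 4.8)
The plan is to apply Theorem \ref{thm:ac} on the surface $X$ obtained by blowing up $\PP^2$ at the point $p$. Write $H$ for the pullback of a line and $E$ for the exceptional divisor, so that $K_X=-3H+E$, and take $\L=|(n-2)H-(n-6)E|$, the strict transform of the system of plane curves of degree $n-2$ with a point of multiplicity $\geq n-6$ at $p$; the prescribed nodes are imposed at $p_1,\ldots,p_\delta$, viewed as general points of $X$. Since the conditions imposed by a single multiple point are independent, $\dim(\L)=\tfrac12\big((n-2)(n+1)-(n-6)(n-5)\big)=5n-16$, while adjunction gives $p_a(\L)=\tfrac12(C^2+K_X\cdot C)+1=3n-15$ for $C=(n-2)H-(n-6)E$. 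Granting hypotheses (i)--(iv) of Theorem \ref{thm:ac}, part (b) then yields $\dim(\L(p_1^2,\ldots,p_\delta^2))=\dim(\L)-3\delta=5n-16-3(n-3+\epsilon)=2h-1-\epsilon$, which is the asserted value.

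It remains to verify (i)--(iv), and a numerical borderline forces one case to be split off. Hypothesis (i) holds because the general member of $\L$ is the strict transform of a general plane curve of degree $n-2$ with an ordinary $(n-6)$-fold point at $p$, hence smooth and irreducible. Hypothesis (iii), namely $5n-16>3n-9+3\epsilon$, is equivalent to $2h-\epsilon>1$ and holds for all $g\geq 6$. Hypothesis (ii), namely $3n-15\geq n-3+\epsilon$, is equivalent to $g\geq 6$; it is the only hypothesis that fails, and it fails exactly for $g=5$ (where $n=6$, $\delta=4$, $p_a(\L)=3$). I would therefore treat $g\geq 6$ through Theorem \ref{thm:ac} and $g=5$ by hand.

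The crux is hypothesis (iv): for general $p_1,\ldots,p_\delta$ one must produce $C\in\L$ singular at all of them whose components $C'$ satisfy $K_X\cdot C'<0$. Since $K_X\cdot(aH-bE)=b-3a$, any component that is a line, or a rational curve through $p$ of class $aH-(a-1)E$, automatically satisfies this, so (iv) reduces to an incidence problem: arranging such components of total class $(n-2)H-(n-6)E$ to pass through the $\delta$ general points two at a time. I would build $C$ from two general irreducible rational curves $D_1,D_2$ of classes $d_iH-(d_i-1)E$ with $d_1+d_2=n-4$ (these carry the full multiplicity $n-6$ at $p$ and each passes through $2d_i$ general points), completed to the correct class by two lines avoiding $p$, and then force the prescribed points to lie pairwise on the components; the available incidences total $2(n-4)+4=2n-4\geq 2\delta$, so the count closes, though the budget is tight when $\epsilon=1$ and its correct distribution is the delicate point. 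Alternatively, using $(n-2)H-(n-6)E=\big(nH-(n-4)E\big)-2(H-E)$, one may hope to obtain $C$ by deleting two general lines through $p$ from the degenerate curve that verifies (iv) for the Arbarello--Cornalba system $|nH-(n-4)E|$ recalled above \cite{ac}, since those lines avoid the general $p_i$ and their removal preserves both the singularities at the $p_i$ and the sign of $K_X\cdot C'$. This construction of the totally degenerate curve is the main obstacle.

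Finally, for $g=5$ I argue directly: here $\L=|4H|$ on $\PP^2$ and $\delta=4$, and a general member of the family $\F=\bigcup_{p_\bullet}\L(p_1^2,\ldots,p_4^2)$ is a union of two conics through the four general points, having exactly the four nodes $p_1,\ldots,p_4$ and no further singularity. The argument proving Theorem \ref{thm:ac}(b) then applies verbatim: the Zariski tangent space to $\F$ at such a curve is the system $\L(p_1,\ldots,p_4)$ of quartics through the four points, of dimension $14-4=10$, so $\dim(\F)\leq 10$ and hence $\dim(\L(p_1^2,\ldots,p_4^2))=\dim(\F)-2\delta\leq 2$; the reverse inequality is automatic, giving the expected value $2=2h-1-\epsilon$.
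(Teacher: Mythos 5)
Your framework coincides with the paper's: blow up $\PP^2$ at $p$, take $\L$ to be the strict transform of $\L_{n-2}(p^{n-6})$, verify hypotheses (i)--(iv) of Theorem \ref{thm:ac}, apply part (b), and treat $g=5$ separately. Your numerology is correct ($\dim(\L)=5n-16$, $p_a(\L)=3n-15$, hypothesis (ii) failing exactly for $g=5$, and (iii) reducing to $2h-\epsilon>1$), and your ad hoc treatment of $g=5$, though more roundabout than the paper's direct identification of $\L_4(p_1^2,\ldots,p_4^2)$ with the pairs of conics through the four points, is acceptable at the same level of rigor.

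However, there is a genuine gap precisely where you flag ``the main obstacle'': the verification of hypothesis (iv) for $g\geq 6$. You treat it as a construction problem~---~produce a degenerate member of $\L(p_1^2,\ldots,p_\delta^2)$ assembled from lines and rational curves through $p$~---~and you never complete the construction; both of your proposed routes are left as hopes (``the budget is tight \ldots{} its correct distribution is the delicate point'', ``one may hope''). The point you miss is that no construction is needed at all: $X$, the blow-up of $\PP^2$ at a \emph{single} point, is a del Pezzo surface, so $-K_X=3H-E$ is very ample, and hence \emph{every} irreducible curve $C'$ on $X$ satisfies $K_X\cdot C'<0$. Indeed, $K_X\cdot E=-1$, while any other irreducible $C'$ of class $aH-bE$ has $a\geq 1$ and $b\leq a$ (multiplicity at a point is at most the degree), so $K_X\cdot C'=b-3a\leq -2a<0$~---~your own formula, pushed one step further. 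Consequently hypothesis (iv) asks only for the \emph{existence} of some member of $\L$ singular at $p_1,\ldots,p_\delta$, and that is guaranteed by the count you already did for (iii): the $\delta$ double points impose at most $3\delta$ linear conditions and $\dim(\L)>3\delta$, so $\L(p_1^2,\ldots,p_\delta^2)\neq\emptyset$. This is exactly why the paper, after noting that the anticanonical system of $X$ is very ample, declares (i) and (iv) ``immediate''. With this observation your proof closes; as written, hypothesis (iv) is unverified and the application of Theorem \ref{thm:ac} is not justified.
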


\begin{proof}[Proof of  Claim \ref {cl:1}] Assume first $g=5$, which implies 
$(h,\epsilon,n,\delta)=(2,1,6,4)$. 
Then one has $\L_{n-2}(p^{n-6}, p^2_1,\ldots, p_\delta^2)=\L_4(p^2_1,\ldots, p_4^2)$, which consists of all pairs of conics through 
$p_1,\ldots, p_4$, and has dimension $2$ as desired.
We can assume next that $g\geq 6$, hence $h\geq 3$ and $n \geq 6$. 

Let $X$ be the blow--up of $\PP^2$ at $p$. Note that the anticanonical system of $X$ is very ample. Consider the linear system $\L$ proper transform on  $X$ of $\L_{n-2}(p^{n-6})$. One checks that $X$ and $\L$ verify the hypotheses (i)--(iv) of Theorem \ref {thm:ac}. Indeed, (i) and (iv) are immediate, whereas (ii) and (iii) follow by standard computations and the fact that $h\geq 3$. Then the assertion follows by Theorem \ref {thm:ac}(b).  \end{proof}

Next fix two distinct lines $r_1,r_2$ through $p$ and, for $1\leqslant i\leqslant 2$, two distinct points $q_{ij}$, both different from $p$, on the line $r_i$, with $1\leqslant j\leqslant 2$. Consider then the linear system $\L_n(p^{n-4}, p^2_1,\ldots, p_\delta^2; [q_{11},q_{12}, q_{21}, q_{22}])$ consisting of all curves in  $\L_n(p^{n-4}, p^2_1,\ldots, p_\delta^2)$ whose intersection multiplicity with $r_i$ at $q_{ij}$ is at least 2, for $1\leqslant i,j\leqslant 2$.

\begin{claim}\label{cl:2} The linear system $\L_n(p^{n-4}, p^2_1,\ldots, p_\delta^2; [q_{11},q_{12}, q_{21}, q_{22}])$ has the expected dimension, i.e.,
\begin{eqnarray*}
\dim(\L_n(p^{n-4}, p^2_1,\ldots, p_\delta^2; [q_{11},q_{12}, q_{21}, q_{22}])) & = &
\frac {n(n+3)}2-\frac {(n-4)(n-3)}2-3\delta-8 \\ & = & 2h+1-\epsilon,
\end{eqnarray*}
and the general curve in $\L_n(p^{n-4}, p^2_1,\ldots, p_\delta^2; [q_{11},q_{12}, q_{21}, q_{22}])$ is irreducible, has a point of multiplicity $n-4$ at $p$, has nodes at $p_1,\ldots, p_\delta$ and no other singularity, and is tangent  at $r_i$ in $q_{ij}$, for $1\leq i,j\leq 2$.
\end{claim}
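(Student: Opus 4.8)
The plan is to prove Claim~\ref{cl:2} in close analogy with Claim~\ref{cl:1}, by realizing the tangency conditions at the $q_{ij}$ as ordinary double-point (node) conditions on a suitable modification of the plane, so that Theorem~\ref{thm:ac} and the dimension-count technique used there apply. First I would set up the correct birational model. Let me blow up $\PP^2$ at the point $p$ and at the four points $q_{11},q_{12},q_{21},q_{22}$, obtaining a smooth rational surface $X$ with exceptional divisors $E_p$ and $E_{ij}$ and with the proper transforms $\tilde r_i$ of the two lines $r_i$. On $X$ consider the linear system $\L$ that is the proper transform of $\L_n(p^{n-4})$ with multiplicity $n-4$ imposed at $p$; the key observation is that asking a curve to be tangent to $r_i$ at $q_{ij}$ (i.e.\ to meet $r_i$ there with multiplicity $\geq 2$) is equivalent, after this blow-up, to asking its proper transform to pass through the point $\tilde r_i\cap E_{ij}$ on $X$, i.e.\ to an ordinary point condition on $X$ centered on the exceptional divisor.

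Next I would reformulate the whole system as a nodal condition. The four tangency points, together with the $\delta$ double points at $p_1,\dots,p_\delta$, should all be treated as node conditions, $p_1,\dots,p_\delta$ being genuinely general in the plane and the four tangency data being general on the fixed lines $r_1,r_2$. The cleanest route is to apply Theorem~\ref{thm:ac} on $X$ with the number of imposed nodes equal to $\delta$ (the generic nodes), after first passing to the sublinear system that already carries the four fixed tangency/incidence conditions; one then checks hypotheses (i)--(iv) exactly as in the proof of Claim~\ref{cl:1}, using that the anticanonical system of $X$ is still effective with every component meeting $K_X$ negatively, and that $h\geq 3$ guarantees (ii) and (iii). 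The numerology must be verified: imposing a point of multiplicity $n-4$ costs $\binom{n-3}{2}=\frac{(n-4)(n-3)}2$, each of the $\delta$ nodes costs $3$, and each of the four tangency conditions costs $2$, for a total expected codimension $\frac{(n-4)(n-3)}2+3\delta+8$ inside $\dim\L_n=\frac{n(n+3)}2$, giving exactly $2h+1-\epsilon$ after substituting $\delta=h+2\epsilon$ and $n=h+3+\epsilon$.

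The delicate point, and the one I would argue most carefully, is \emph{independence} of the conditions: one must show that the four tangency conditions at the $q_{ij}$ are independent from one another and from the generic nodal conditions, so that the codimension is the full expected $8+3\delta$ rather than something smaller (the $\varepsilon\geq 0$ discrepancy must be shown to vanish). I expect this to be the main obstacle. The natural tool is the same tangent-space/parameter-count argument used in the proof of Theorem~\ref{thm:ac}(b): form the family $\F$ obtained by letting $p_1,\dots,p_\delta$ vary while keeping the fixed data $r_1,r_2,q_{ij}$, compute $\dim\F$ in two ways, and identify the tangent space to $\F$ at a general member with the subsystem of $\L$ passing simply through the $\delta$ node-points, whose dimension is forced by the generality of the $p_i$. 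Because the $q_{ij}$ are general on the lines $r_i$ (in particular distinct from $p$ and from the base locus forced by the multiplicity at $p$), the four extra conditions impose four further linear constraints that remain independent after the blow-up; combined with the vanishing of $\varepsilon$ from Theorem~\ref{thm:ac}(b) applied on $X$, this yields the exact dimension. The irreducibility and the precise singularity description (ordinary $(n-4)$-fold point at $p$, exactly $\delta$ nodes, and genuine tangency to $r_i$ at $q_{ij}$ with no further singularities) then follow from part~(a) of Theorem~\ref{thm:ac} together with a Bertini-type argument away from the fixed base locus, exactly as in the proof of Claim~\ref{cl:1}.
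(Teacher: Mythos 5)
Your setup (blowing up at $p$ and the $q_{ij}$, viewing tangency to $r_i$ at $q_{ij}$ as passage through $\tilde r_i\cap E_{ij}$, and keeping only $p_1,\dots,p_\delta$ as the general nodes fed into Theorem~\ref{thm:ac}) is essentially the paper's, and your numerology is correct. But there is a genuine gap at the decisive step: the verification of hypothesis~(iv) of Theorem~\ref{thm:ac}. You assert it holds because ``the anticanonical system of $X$ is still effective with every component meeting $K_X$ negatively,'' and this is false on this surface. Unlike in Claim~\ref{cl:1}, where $X$ is the blow-up at the single point $p$ and $-K_X$ is very ample (so (iv) is immediate), here the strict transforms $R_1,R_2$ of the two lines are fixed components of the anticanonical system and satisfy $K_X\cdot R_i=0$ in your five-point model (and $K_X\cdot R_i=2$ in the paper's model, which also blows up the infinitely near points $\tilde r_i\cap E_{ij}$). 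So a curve singular at $p_1,\dots,p_\delta$ could a priori contain $R_1,R_2$ (or the exceptional divisor $E$ over $p$) as components, in which case hypothesis~(iv) would fail; nothing in your proposal rules this out, and ``exactly as in the proof of Claim~\ref{cl:1}'' does not apply because the two surfaces are genuinely different in this respect.

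This is exactly the point on which the paper's proof spends its effort, and it is the actual purpose of Claim~\ref{cl:1} in the paper: one assumes by contradiction that $R_1,R_2$ (and hence possibly $E$) split off as fixed components of $\L(p_1^2,\dots,p_\delta^2)$; the residual system is then the pull-back of $\L_{n-2}(p^{n-6},p_1^2,\dots,p_\delta^2)$, which by Claim~\ref{cl:1} has dimension $2h-1-\epsilon$, contradicting the lower bound $\dim\L(p_1^2,\dots,p_\delta^2)\geq\dim\L-3\delta=2h+1-\epsilon$. Only after this does one know that every component of a general member of $\L(p_1^2,\dots,p_\delta^2)$ meets $K_X$ negatively, so that Theorem~\ref{thm:ac} applies. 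Relatedly, your ``main obstacle'' is misidentified: the independence of the conditions (the vanishing of the discrepancy $\varepsilon$) does not need a separate tangent-space argument --- it is precisely the output of Theorem~\ref{thm:ac}(b) once hypotheses (i)--(iv) are in place. The missing content in your proposal is the fixed-component exclusion above, i.e., the role played by Claim~\ref{cl:1}; without it the application of Theorem~\ref{thm:ac} is unjustified.
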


\begin{proof}[Proof of  Claim  \ref {cl:2}] Let $X$ be the blow--up of the plane at $p$, at the points $q_{i,j}$ and at the infinitely near points to $q_{ij}$ along the line $r_i$, for $1\leqslant i,j\leqslant 2$. Note that the anticanonical system of $X$ has a fixed part consisting of the strict transforms $R_1, R_2$ of $r_1,r_2$ plus the exceptional divisor $E$ over $p$, and a movable part consisting of the pull back to $X$ of the linear system of the lines in the plane.

Let $\L$ be  the strict transform on $X$ of
$\L_n(p^{n-4};   [q_{11},q_{12}, q_{21}, q_{22}])$, the linear system of curves of degree $n$ with multiplicity at least $n-4$ at $p$ and whose intersection multiplicity with $r_i$ at $q_{ij}$ is at least 2, for $1\leqslant i,j\leqslant 2$. One has
\[
\dim(\L)={\frac {n(n+3)} 2}-{\frac {(n-4)(n-3)}2}-8
\]
and an application of Bertini's theorem shows that the general curve in $\L$ is smooth and irreducible and its genus is 
\[
p_a(\L)={{n-1}\choose 2}-{{n-4}\choose 2}\geq \delta.
\]
 Moreover 
\[
\dim(\L)-3\delta=2h+1-\epsilon>0
\]
Hence the linear system $\L(p_1^2,\ldots, p_\delta^2)$ of curves in $\L$ singular at $p_1,\ldots, p_\delta$ has dimension 
\[
\dim(\L(p_1^2,\ldots, p_\delta^2))\geq 2h+1-\epsilon.
\]
We claim that  $\L(p_1^2,\ldots, p_\delta^2)$ does not have $R_1, R_2$ or $E$ in its fixed locus. Indeed, if $E$ is in this fixed locus, then clearly also $R_1$ and $R_2$ split off $\L(p_1^2,\ldots, p_\delta^2)$. If $R_1$ is in the fixed locus, then by symmetry, also $R_2$ is in the fixed locus. So, suppose by contradiction that
$R_1, R_2$ are in the fixed locus.   
Then, after removing them from $\L(p_1^2,\ldots, p_\delta^2)$ we would remain with $\L'$, the pull--back to $X$ of  $\L_{n-2}(p^{n-6}, p_1^2,\ldots, p_\delta^2)$, which, by Claim \ref {cl:1}, has dimension $2h-1-\epsilon$. Hence we would have 
\[
2h-1-\epsilon=\dim(\L_{n-2}(p^{n-6}, p_1^2,\ldots, p_\delta^2))=\dim(\L(p_1^2,\ldots, p_\delta^2))\geq  2h+1-\epsilon,
\]
a contradiction. 

Let now $C$ be a general curve in $\L(p_1^2,\ldots, p_\delta^2)$. The
above argument implies that no component of $C$ is a fixed curve of
the anticanonical system of $X$. Then for any irreducible component
$C'$ of $C$ one has $K_X\cdot C'<0$. In conclusion, $\L$ verifies the
hypotheses (i)--(iv) of Theorem \ref {thm:ac}, and Claim~\ref{cl:2}
follows by the latter theorem.
\end{proof}

We now end the proof of Proposition \ref {prop:Prym}.
Consider the locally closed family of curves
\[
\G:=
 \bigcup_{p_1,\ldots, p_\delta,r_1,r_2,q_{11},q_{12}, q_{21}, q_{22}}  \L_n(p^{n-4}, p^2_1,\ldots, p_\delta^2; [q_{11},q_{12}, q_{21}, q_{22}])
\]
where the union is made by varying $p_1,\ldots, p_\delta$ among all $\delta$--tuples  of general  distinct  points of $X$, $r_1,r_2$ among all pairs of distinct lines through $p$ and $q_{ij}\not=p$ among all pairs of  distinct points of $r_i$, for $1\leq i,j\leq 2$.

Of course $\G$ is irreducible and rational, and we have a map $\alpha: \G\dasharrow \R^ 0_g$ which sends a general curve $\Gamma\in \G$ to $(C,\eta)$, where $C$ is the normalization of $\Gamma$, and $\eta=\O_C(q_{11}+q_{12}-q_{21}-q_{22})$, where, by abusing notation,  we denote by $q_{ij}$ their inverse images  in $C$, for $1\leq i,j\leq 2$. 
Since $\H\dasharrow \M_{g,4}^1$ is dominant by \cite[\S5] {ac}, then $\alpha$ is also dominant  by Lemma \ref{lemma:Prym2}.%
\footnote{\label{foot-add}
In fact this statement requires additional arguments. We provide those
in an Addendum (Section~\ref{addendum}).}
This proves the proposition.
\end{proof}

\section{Dimension of $\R^0_g$}\label{sec:dim}

In this section we finish the proof of Theorem \ref{thm:main} with the:

\begin{proposition} \label{prop:Prym3} 
   The irreducible locus $\R^0_g$ has dimension $2g+1$ 
if $g\geq 5$. 
\end{proposition}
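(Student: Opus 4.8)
The plan is to extract the dimension of $\R^0_g$ from the dominant rational map $\alpha:\G\dasharrow\R^0_g$ built in the proof of Proposition \ref{prop:Prym}. Since $\G$ and $\R^0_g$ are irreducible and $\alpha$ is dominant, one has $\dim\R^0_g=\dim\G-\dim F$, where $F$ is a general fibre of $\alpha$. First I would record, using Claim \ref{cl:2} and the parameter count defining $\G$ (the summand $2\delta=2h+4\epsilon$ for the nodes $p_1,\dots,p_\delta$, a $+2$ for the pencils $r_1,r_2$ of lines through $p$, and a $+4$ for the points $q_{ij}$), that
\[
\dim \G = (2h+1-\epsilon) + 2\delta + 2 + 4 = 4h+7+3\epsilon .
\]
Everything then reduces to proving that $\dim F = 6+\epsilon$, after which $\dim\R^0_g = (4h+7+3\epsilon)-(6+\epsilon) = 2g+1$.

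To compute $\dim F$ I would fix a general $(C,\eta)\in\R^0_g$ and describe $\alpha^{-1}(C,\eta)$. A point of this fibre is a plane model $\Gamma$ (of degree $n$, with an ordinary $(n-4)$-fold point at the fixed $p$ and $\delta$ nodes) together with the marking $(r_1,r_2,q_{ij})$, all mapping to $(C,\eta)$; the pencil of lines through $p$ induces a $g^1_4$, say $A$, on $C$, and the marking records two members $2D_1,2D_2$ of $|A|$ with $D_i$ effective of degree $2$ and $D_1-D_2\sim\eta$. The key observation is that the marking is \emph{automatic}: for any plane model of the pair $(C,A)$, the line of the pencil corresponding to the member $2D_i\in|A|$ is necessarily bitangent to $\Gamma$ at the two points of $D_i$, so $(r_1,r_2,q_{ij})$ is determined, up to a finite labelling, by the discrete datum $(A,D_1,D_2)$. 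Hence, up to finitely many choices, $F$ is the family of plane models of a \emph{fixed} tetragonal pair $(C,A)$, which is precisely a general fibre of the natural dominant map $\H\dasharrow\H_{g,4}$ to the Hurwitz scheme of tetragonal curves. Since $\dim\H=4h+9+3\epsilon$ (recalled in \S\ref{sec:irr}) and $\dim\H_{g,4}=2g+3$, this fibre has dimension $(4h+9+3\epsilon)-(2g+3)=6+\epsilon$, as wanted.

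The step requiring genuine care, and the main obstacle, is the finiteness of the datum $(A,D_1,D_2)$ over a general $(C,\eta)\in\R^0_g$. Writing $W_2(C)\subset\Pic^2(C)$ for the locus of effective classes of degree $2$, such a datum amounts to a class $[D_1]\in W_2(C)\cap (W_2(C)+\eta)$ with $A:=2D_1$ base point free (then $D_2:=D_1-\eta$ is automatically effective and $2D_2\sim A$). As $\dim W_2(C)=2$ while $\dim\Pic^2(C)=g\geq 5$, this intersection has negative expected dimension; it is nonempty exactly on the proper locus $\R^0_g$ and should be finite for a general member. For $g\geq 7$ this is immediate, since a general tetragonal curve carries a unique $g^1_4$, which has only finitely many members of type $2D$; the delicate cases are $g=5,6$, where the $g^1_4$'s move in a family and one must show that imposing two square-root members differing by the fixed $\eta$ cuts it down to a finite set. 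Note that the opposite bound $\dim\R^0_g\leq 2g+1$ comes for free: the plane models of $(C,A)$ already form a $(6+\epsilon)$-dimensional subfamily of $F$, so $\dim F\geq 6+\epsilon$ and $\dim\R^0_g=\dim\G-\dim F\leq 2g+1$. The finiteness above then supplies the matching lower bound and yields $\dim\R^0_g=2g+1$.
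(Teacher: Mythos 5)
Your route is genuinely different from the paper's (which never computes $\dim\G$ or fibres of $\alpha$, but instead works inside the Hurwitz scheme: it shows the image of $\varphi$ is the locus $\D\subset\H_{g,4}$ of covers with two branch points of ramification pattern $(2,2)$, gets $\dim\D=2g+1$ from Riemann's existence theorem, proves $\pi|_{\D}$ is generically finite, and concludes by finiteness of $\psi$). However, your version has a genuine gap at its central step. You identify the general fibre $F$ of $\alpha$, up to finite data, with ``a general fibre of the dominant map $\H\dasharrow\H_{g,4}$'' and compute its dimension as $\dim\H-\dim\H_{g,4}=6+\epsilon$. But the tetragonal pair $(C,A)$ attached to a point of $\R^0_g$ is \emph{never} a general point of $\H_{g,4}$: by Lemma \ref{lemma:Prym2}(ii) its cover has two $(2,2)$-branch points, i.e.\ $(C,A)$ lies in the codimension-two locus $\D$. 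Fibre dimension is only semicontinuous, so over this special locus every component of the fibre of $\H\dasharrow\H_{g,4}$ has dimension \emph{at least} $6+\epsilon$ --- which correctly yields your inequality $\dim F\geq 6+\epsilon$, hence $\dim\R^0_g\leq 2g+1$ --- but nothing formal bounds the fibre above. The inequality you actually need, $\dim F\leq 6+\epsilon$, equivalently $\dim\R^0_g\geq 2g+1$, is exactly where the content lies: it amounts to a Brill--Noether-type statement controlling the plane models (equivalently, the $g^2_n$'s containing the given $g^1_4$) on the \emph{special} tetragonal curves parametrized by $\D$, and it does not follow from dominance of $\H\dasharrow\H_{g,4}$. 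The paper's proof is structured precisely so as never to need such a statement.

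The step you flag as the ``main obstacle'' --- finiteness of the data $(A,D_1,D_2)$ over a general $(C,\eta)\in\R^0_g$ --- is indeed needed, but your argument for it is invalid for $g\geq 7$ and absent for $g=5,6$. Uniqueness of the $g^1_4$ on a \emph{general} tetragonal curve does not apply here, because the curves underlying general points of $\R^0_g$ are not general tetragonal curves (indeed, by the very proposition being proved they form a codimension-two sublocus of the tetragonal locus when $g\geq7$); a priori they could all be bielliptic, and bielliptic curves carry a one-dimensional family of $g^1_4$'s, each with square-root members. The paper excludes exactly this scenario by a dimension count: covers with bielliptic domain form a $(2g-1)$-dimensional locus in $\H_{g,4}$, which cannot contain the $(2g+1)$-dimensional $\D$, so the general cover in $\D$ has non-bielliptic domain; then Corollary \ref{cor:notbir} forces the Prym--canonical map to be birational, and Lemma \ref{lemma:Prym2}(ii) converts the finitely many singular points of its image (for the finitely many $2$-torsion $\eta$) into finiteness of the fibres of $\pi|_{\D}$, for all $g\geq 5$. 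Importing that argument would repair this second point, but it would still not repair the fibre-jump gap above.
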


\begin{proof}
Let
$\H_{g,4}$ denote the  Hurwitz scheme parametrizing  isomorphism classes of  genus $g$ degree $4$ covers of $\PP^1$. We have a commutative diagram 
\[
\xymatrix{\G \ar@{-->}[d]_{\alpha} \ar@{-->}[r]^{\hspace{-0.2cm}\varphi} & \H_{g,4} \ar[d]^{\pi} \\
\R^0_{g} \ar[r]^\psi & \M^1_{g,4},
}
\]
where $\pi$ and $\psi$ are the forgetful maps, $\alpha$ is the dominant map from the last part of the proof of Proposition \ref{prop:Prym} and $\varphi$ maps a general curve $\Gamma \in \G$ to the  degree 4  cover defined by $2(q_{11}+q_{12}) \sim 2(q_{21}+q_{22})$, using the notation of the proof of Proposition \ref{prop:Prym}.  Note that  $\psi$ is  finite,  whence the dimension of $\R^0_g$ equals the dimension of the image of $\pi \circ \varphi$.

The image of $\varphi$ coincides with the locus  $\D \subset \H_{g,4}$
parametrizing covers with two pairs of distinct ramification points
 each over the same \ branch point. By
Riemann's existence theorem, $\D$ has codimension $2$ in $\H_{g,4}$
(whence $\dim( \D)=2g+1$).  Since  $\G$  is
irreducible  (cf.\ the proof of Proposition \ref
{prop:Prym}),  so is $\D$. Moreover, as the bielliptic
locus in $\M_g$ has dimension $2g-2$ and  each bielliptic curve has a
one-dimensional family of $g^1_4$s, the locus in $\H_{g,4}$ with
bielliptic  domain  curve has dimension $2g-1$. Thus, the general
element in the image of $\pi \circ \varphi$ is not bielliptic, whence
the general element  $(C,\eta)$  in the image of $\alpha$ 
 has Prym--canonical image birational to $C$,
 by Corollary \ref{cor:notbir} (and necessarily singular, by Lemma \ref{lemma:Prym2}(ii)). It follows that the fibre over $C$ of the 
restriction of $\pi$ to $\D$  is  finite. Indeed, $C$ has finitely many preimages $(C,\eta)$ in the image of $\alpha$, and the Prym--canonical model of each of those has finitely many singular points, determining by Lemma \ref{lemma:Prym2}(ii) only finitely many covers in $\D \subset \H_{g,4}$ mapping to $C$ by $\pi$. Thus, the 
restriction of $\pi$ to $\D$  is generically  finite, whence the image of $\pi \circ \varphi$ has dimension $2g+1$. 
\end{proof}

\section{Proof of proposition \ref{prop:nod} and some examples} \label{sec:prpr}

Consider again the locus  $\D \subset \H_{g,4}$ from the proof of Proposition 
\ref{prop:Prym3} 
parametrizing  isomorphism classes of  covers
with two pairs of distinct ramification points 
each over a single branch point. By Riemann's
existence theorem again, the general point in $\D$ corresponds to a
cover with only two such branch points. By Lemma
\ref{lemma:Prym2}(ii), if the  domain  curve
has only one $g^1_4$, which is automatic if $g \geq 10$, then the
Prym--canonical model of such a curve has precisely two nodes. It
cannot have fewer singularities by Lemma \ref{lemma:Prym2}. 
Thus, Proposition \ref{prop:nod} is proved for $g \geq 10$. 

Instead of embarking in a  more refined treatment for $g \leq 9$, we note that certain curves on Enriques surfaces provide examples, for any genus $g \geq 5$, of curves with two--nodal Prym--canonical models, thus finishing the proof of Proposition \ref{prop:nod}:

\begin{example} \label{exa:enr1}
  The general Enriques surface $S$ contains no smooth rational curves \cite{bp} and contains smooth elliptic curves $E_1,E_2,E_3$ with $E_i \cdot E_j=1$ for $i \neq j$ (and $E_i^2=0$ by adjunction),  for $1\leq i, j\leq 3$,  cf.\ e.g. \cite[Thm. 3.2]{cos2} or \cite[IV.9.E, p.~273]{cd}. It also contains a smooth elliptic curve $E_{1,2}$ such that $E_{1,2} \cdot E_1=E_{1,2} \cdot E_2=2$, and $E_{1,2} \cdot E_3 =1$, cf.\ e.g. \cite[Thm. 3.2]{cos2} or \cite[IV.9.B, p.~270]{cd}. In particular, none of the numerical equivalence classes of $E_1,E_2,E_3,E_{1,2}$ are divisible in $\Num(S)$.

Consider, for any $g \geq 5$, the line bundle
\[  H_g := \begin{cases} \O_S(\frac{g-2}{2}E_1 +E_2 +E_3 ), & g \; \; \mbox{even} \\
 \O_S(\frac{g-1}{2}E_1 +E_{1,2} ), & g \; \; \mbox{odd}.\end{cases}
\]

The absence of smooth rational curves yields that $H_g$ is nef. As  $H_g^2=2g-2$,  all curves in $|H_g|$ have
arithmetic genus $g$. Moreover, we claim that $\phi(H_g)=E_1 \cdot H_g=2$ (see the introduction for the definition of $\phi$) and that the only numerical class computing $\phi(H_g)$ is $E_1$.
Indeed, if $g$ is even (respectively, odd), then $E_1 \cdot H_g=2$, $E_2 \cdot H_g=E_3 \cdot H_g=\frac{g}{2} \geq 3$ (resp., $E_1 \cdot H_g=2$, $E_{1,2} \cdot H_g=g-1 \geq 4$), and if $E$ is any nonzero effective divisor not numerically equivalent to any of $E_1,E_2,E_3$ (resp., $E_1,E_{1,2}$), then $E \cdot E_1>0$, $E \cdot E_2>0$ and $E \cdot E_ 3>0$ (resp., $E \cdot E_1>0$ and $E \cdot E_{1,2}>0$) by \cite[Lemma 2.1]{klvan}, so that $E \cdot H_g \geq \frac{g-2}{2}+2=\frac{g}{2}+1 \geq 4$ (resp.,  $E \cdot H_g \geq \frac{g-1}{2}+1=\frac{g+1}{2} \geq 3$).

By \cite[Prop. 4.5.1, Thm. 4.6.3, Prop. 4.7.1, Thm. 4.7.1]{cd}  the complete linear system $|H_g|$ is  therefore base point free and defines a 
morphism $\varphi_{H_g}$ that is birational onto a surface with only
double lines as singularities; the double lines are the images of
curves computing $\phi(H_g)$, which, by what we said above, are $E_1$ and $E'_1$,
the only member of $|E_1+K_S|$.  Thus, the image of $\varphi_{H_g}$ is
a surface with precisely two double lines $\varphi_{H_g}(E_1)$ and
$\varphi_{H_g}(E_1')$ as singularities. Therefore, $\varphi_{H_g}$ maps a general smooth $C \in |H|$  to a curve with precisely two nodes. Since  $\varphi_{H_g}$ restricted to $C$ is the Prym--canonical map associated to $\eta:=\O_C(K_S)$ by \cite[Cor. 4.1.2]{cd}, a general smooth curve $C$ in $|H_g|$ together with $\eta$ is an example of a Prym curve of any genus $g \geq 5$ with two--nodal Prym--canonical model. 

 We prove in  \cite[Thm. 2]{cdgk} that the general element in $\R^0_g$ is obtained in this way precisely for $5 \leq g \leq 8$.  

Similar examples for odd $g \geq 7$ are obtained from the line bundle 
$H_g:=\O_S(\frac{g-1}{2}E_1 +2E_{2})$  or $H_g:=\O_S(\frac{g-1}{2}E_1 +2E_{2}+K_S)$,  but (again by \cite[Thm. 2]{cdgk}) the general element in $\R^0_g$ is not obtained in this way. 
\end{example}

We conclude with an example of curves of genus $5$ on an Enriques surface with $4$-nodal Prym--canonical models and a result that will be used in \cite{cdgk}:

\begin{example} \label{exa:enr2}
  With the same notation as in the previous example, set $H:=\O_S(2E_1+2E_2+K_S)$.  Then $H^2=8$, so that any curve in $|H|$ has arithmetic genus $5$. Moreover, $\phi(H)=2$ and one easily checks that $E_1$ and $E_2$ are the only numerical equivalence classes computing $\phi(H)$. As in the previous example, the complete linear system  $|H|$  is base point free and defines a 
morphism  $\varphi_{H}$  that is birational onto a surface with precisely four double lines as singularities, namely the images of $E_1$, $E_2$, $E_1'$ and $E'_2$, where $E'_i$ is the only member of $|E_i+K_S|$, $i=1,2$. Thus $\varphi_H$ maps a general smooth $C \in |H|$  to a curve with precisely four nodes, so that, again by \cite[Cor. 4.1.2]{cd}, the pairs $(C,\O_C(K_S))$ are genus $5$ Prym curves with $4$-nodal Prym--canonical models. 

Also note that for any smooth $C \in |H|$, we have
\[ \omega_C \cong \O_C(E_1+E_2)^{\* 2} \cong \O_C(E_1+E_2+K_S)^{\* 2},\]
whence $C$ has two autoresidual $g^1_4$s, namely $|\O_C(E_1+E_2)|$ and
$|\O_C(E_1+E_2+K_S)|$,  and their difference is $\O_C(K_S)$.  (A complete linear system $|D|$  is called {\it autoresidual} if $D$ is a theta-characteristic, that is, $2D \sim \omega_D$.) 
Thus,  $(C,\O_C(K_S))$ 
 belongs to the  locus  in  $\R_5$  consisting of  Prym
  curves  $(C,\eta)$  carrying a theta-characteristic $\theta$ such that
$h^0(\theta)=h^0(\theta+\eta)=2$.  The next result shows that
this is a general phenomenon  in $\R^0_5$. 
\end{example}

\begin{proposition}
  The  locus  in  $\R^0_5$  of curves with $4$-nodal Prym-canonical model is an irreducible  unirational  divisor  whose closure in $\R_5$
coincides with the closure of the locus of Prym curves $(C,\eta)$ carrying a theta-characteristic $\theta$
with
$h^0(\theta)=h^0(\theta+\eta)=2$. 
\end{proposition}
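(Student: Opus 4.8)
The plan is to prove the statement in two movements: first establish the properties of the $4$-nodal locus in $\R^0_5$ on its own terms (irreducibility, unirationality, that it is a divisor), and then identify its closure with the theta-characteristic locus. For the first part, I would return to the rational parametrization $\G \dashrightarrow \R^0_g$ built in the proof of Proposition \ref{prop:Prym}, specialized to $g=5$, where $(h,\epsilon,n,\delta)=(2,1,6,4)$. The $4$-nodal Prym-canonical locus corresponds, by Lemma \ref{lemma:Prym2}(ii), to curves $C$ carrying \emph{two} distinct $g^1_4$s that each produce a pair of nodes; equivalently, by Example \ref{exa:enr2}, to the situation where $\omega_C$ is expressible as $2\L$ for a $g^1_4$ in two different ways, the two $g^1_4$s being autoresidual and differing by $\eta$. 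Thus I would single out inside $\G$ (or a natural rational cover thereof) the sublocus imposing a second such autoresidual $g^1_4$, check that this sublocus is irreducible and rational and drops the dimension by exactly one, and deduce via the (finite) map $\psi$ of Proposition \ref{prop:Prym3} that the image in $\R^0_5$ is an irreducible unirational divisor.

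For the identification of closures, the key translation is the one made explicit in Example \ref{exa:enr2}: for a curve $C$ with a $4$-nodal Prym-canonical model, the two pairs of non-separated points assemble, via Lemma \ref{lemma:Prym2}(ii), into two $g^1_4$s, say $|\L_1|$ and $|\L_2|$, with $\L_1 + \L_2 \sim \omega_C$ and $\eta \sim \L_1 - \L_2$. Setting $\theta := \L_1$ (or the appropriate adjustment so that $2\theta \sim \omega_C$), one gets a theta-characteristic with $h^0(\theta) \geq 2$, and since $\theta + \eta \sim \L_2$ is the other $g^1_4$ one also has $h^0(\theta+\eta) \geq 2$; a Clifford/genus-$5$ count forces equality, giving $h^0(\theta)=h^0(\theta+\eta)=2$. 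This shows one inclusion. For the reverse inclusion I would start from a Prym curve $(C,\eta)$ with such a $\theta$, set $\L_1 := \theta$, $\L_2 := \theta+\eta$, and argue that $\L_1,\L_2$ are base-point-free $g^1_4$s (genus $5$, degree $4$, $h^0=2$) whose ramification-type divisors furnish, via Lemma \ref{lemma:Prym2}(ii), two independent pairs of non-separated points on the Prym-canonical model, hence at least — and by Lemma \ref{lemma:Prym2} exactly — four nodes on a general such curve.

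I expect the main obstacle to be controlling the genus-$5$ degenerate behaviour rather than the formal dictionary. Concretely, two issues need care: first, a $g^1_4$ with $h^0=2$ on a genus-$5$ curve can fail to be base point free precisely when $C$ is trigonal (the $g^1_4$ then being a $g^1_3$ plus a base point), and a theta-characteristic with $h^0\geq 2$ must be handled against the Clifford-index geography of genus $5$ (trigonal, bielliptic, and plane-quintic cases), so I would need to verify that these degenerate configurations lie in the \emph{closure} but do not form an extra component, which is why the statement is phrased in terms of closures in $\R_5$. Second, one must rule out that the generic point of the theta-characteristic locus is bielliptic (where the Prym-canonical map is $2{:}1$ onto an elliptic curve by Corollary \ref{cor:notbir}, not $4$-nodal); a dimension comparison with the bielliptic locus, exactly as in the proof of Proposition \ref{prop:Prym3}, should exclude this on a dense open subset. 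Once these genus-$5$ subtleties are dispatched, the two inclusions of locally closed irreducible sets of the same dimension $2g-2 = 8$ (one inside the other) force the closures to agree, completing the proof.
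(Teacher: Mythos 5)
Your forward inclusion (four nodes $\Rightarrow$ two theta-characteristics $\theta$, $\theta+\eta$ with $h^0=2$) is essentially the paper's, up to a sign slip: the two pencils built from the nodes satisfy $\L_1+\L_2\cong\omega_C(\eta)$ and $\eta\cong\L_1-\L_2$, hence $2\L_1\cong 2\L_2\cong\omega_C$; with your relation $\L_1+\L_2\sim\omega_C$ the $\L_i$ would not be theta-characteristics. But the rest of the proposal has genuine gaps. First, for the ``irreducible unirational divisor'' part you only say you would single out, inside the plane-curve family $\G$ of Proposition \ref{prop:Prym}, the sublocus where a second autoresidual $g^1_4$ appears, and ``check'' that it is irreducible, rational, and of codimension one. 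That is a restatement of the goal, not an argument: the existence of a second autoresidual $g^1_4$ is not a linear (nor visibly rational) condition on $\G$, and nothing in your sketch controls the irreducibility or the dimension of this sublocus. The paper's proof hinges on a different construction that your proposal never reaches: the $4$-nodal Prym-canonical models are re-embedded as curves of type $(4,4)$ on $\PP^1\x\PP^1$ whose four nodes form a ``square'' configuration of two fibres of each ruling (this uses excluding the quadric cone via \eqref{eq:meno2}), and the resulting locus $\V$ is birational to $\PP^{12}\x(\Sym^2(\PP^1))^2$, giving irreducibility, rationality, and $\dim = 16-6=10$ after dividing by $\Aut(\PP^1\x\PP^1)$. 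Relatedly, your dimension ``$2g-2=8$'' is wrong: the correct dimension is $10=\dim\R^0_5-1$; the number $2g-2$ is the dimension of the bielliptic locus $\R^{0,\mathrm{nb}}_g$, which you appear to have conflated with the $4$-nodal locus.

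Second, your mechanism for the reverse inclusion would fail as stated. You propose that the ``ramification-type divisors'' of $\theta$ and $\theta+\eta$ produce the nodes via Lemma \ref{lemma:Prym2}(ii). But that lemma requires a $g^1_4$ containing two \emph{double} members $2(p+q)\sim 2(x+y)$ with $\eta\sim p+q-x-y$, and on a general curve of the locus in question the theta-pencils have no double members at all: in the square configuration the divisors of $\theta=|p+q+p'+q'|$ and $\theta+\eta=|p+q+x'+y'|$ cut out by the rulings through the nodes are reduced, while the pencils with double members are the \emph{different} $g^1_4$s $|2(p+q)|=|2(x+y)|$ and $|2(p'+q')|=|2(x'+y')|$, which are not theta-characteristics. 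The workable route from $(C,\theta,\eta)$ to the nodes is instead: the multiplication map $H^0(\theta)\otimes H^0(\theta+\eta)\to H^0(\omega_C(\eta))$ is injective by the base-point-free pencil trick (its kernel is $H^0(\eta)=0$), hence an isomorphism for dimension reasons, so the Prym-canonical map factors through $C\to\PP^1\x\PP^1\subset\PP^3$, and the image, of type $(4,4)$ and arithmetic genus $9$, acquires $\delta=4$ singular points, generically four nodes. The paper avoids having to justify this genericity on every component of the theta-characteristic locus $T_5$ by a different argument: it proves directly that $T_5$ is irreducible of dimension $10$ (via the forgetful image $\B_5$ in $\M_5$, described as intersections of two rank-$3$ quadrics and a general quadric in $\PP^4$, i.e.\ quadric sections of a $4$-nodal Del Pezzo surface), so that the containment $\D^0_5\subseteq T_5$ between irreducible loci of equal dimension forces equality of closures. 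Without either that irreducibility statement for $T_5$ or a correct generic reverse inclusion valid on all of its components, your final ``equal closures'' step does not go through.
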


\begin{proof}
  Let us denote by $\D^0_5$ the locus of curves in $\R^0_5$ with $4$-nodal Prym-canonical model, which is  nonempty by the previous example.
Let $\V$ denote the locus of curves of type $(4,4)$ on $\PP^1 \x \PP^1$ with $4$ nodes lying on the $4$ nodes of a ``square'' configuration of two fibres of each projection to $\PP^1$. We will prove that $\V$ is irreducible of dimension $16$ and that there is a birational  morphism 

\[ f: \D^0_5 \longrightarrow \V':=\V/\Aut (\PP^1 \x \PP^1). \]

To define $f$, let $(C,\eta) \in \D^0_5$. By Lemma \ref{lemma:Prym2} there are four pairs of distinct points $(p,q)$, $(x,y)$, $(p',q')$ and $(x',y')$ on $C$, each identified by the Prym--canonical map $\varphi:C \to \PP^3$, such that
\begin{eqnarray} 
\label{eq:g14}
  2(p+q) \sim 2(x+y), \; \; 2(p'+q') \sim 2(x'+y') \; \; \mbox{and} \\
\label{eq:eta} \eta \sim p+q-x-y \sim x'+y'-p'-q'.  
\end{eqnarray}
In particular, we get that
\begin{equation}
  \label{eq:ell1}
 p+q+p'+q' \sim  x+y+x'+y',
\end{equation}
thus defining a base point free $g^1_4$ on $C$, which we call $\ell_1$. We let $\L_1$ on $C$ be the corresponding line bundle.  Since there exists a pencil of hyperplanes in $\PP^3$ through any two of the four nodes of $\Gamma:=\varphi(C)$, we see that 
\begin{equation} \label{eq:meno1}
h^0(\omega_C(\eta)-\L_1)=h^0\bigl(\omega_C(\eta)(-p-q-p'-q')\bigr)=2.
\end{equation}
We claim that
\begin{equation} \label{eq:meno2}
h^0(\omega_C(\eta)-2\L_1)=0.
\end{equation}
Indeed, if not, we would have $\omega_C(\eta) \cong 2\L_1$, which together with \eqref{eq:meno1} would yield that  $\Gamma \subset \PP^3$  is contained in a quadric cone $Q$,  
with the pullback of the ruling of the cone cutting $\ell_1$ on $C$. Let $\widetilde{Q}$ be the desingularization of $Q$. Then $\widetilde{Q} \cong \FF_2$. Since $\ell_1$ is base point free,   $\Gamma$  does not pass through the vertex of $Q$, so that we may consider   $\Gamma$  as a curve in $\widetilde{Q}$. Denote by $\sigma$ the minimal section of $\FF_2$ (thus, $\sigma^2=-2$), which is contracted to the vertex of $Q$, and by $\f$ the class of the fibre  of the ruling.  Then, since  $\Gamma \cdot \f=4$  and  $\Gamma \cdot \sigma=0$, we get that   $\Gamma \sim 4 \sigma+8 \f$.  In particular,  $\omega_{\Gamma} \cong \O_{\Gamma}(K_{\widetilde{Q}}+\Gamma)) \cong
\O_{\Gamma}(2\sigma+4\f) \cong \O_{\Gamma}(4\f)$.  Thus, from \eqref{eq:ell1}
we obtain 
\[ \omega_C \cong \varphi^*(\omega_{\Gamma}) (-p-q-x-y-p'-q'-x'-y') \cong \O_C(4\L_1-2\L_1) \cong \O_C(2\L_1),\]
yielding $\eta=0$, a contradiction.
This proves \eqref{eq:meno2}.

The relations \eqref{eq:meno1} and \eqref{eq:meno2} imply that   $\Gamma \subset \PP^3$  is contained in a smooth quadric surface $Q \cong \PP^1 \x \PP^1$. 
The first ruling is defined by the pencil $\ell_1$, whereas the second is defined by the pencil $\ell_2=|\L_2|$, where $\L_2:=\omega_C(\eta)-\L_1=\omega_C(\eta)(-p-q-p'-q')$ by \eqref{eq:meno1}. The curve 
 $\Gamma$  is of type $(4,4)$ on $Q$, with four nodes. Since  $\omega_{\Gamma} \cong \omega_{\PP^1 \x \PP^1}(C) \cong \O_{\Gamma}(2,2)$, we see that
 $\varphi^*(\omega_{\Gamma}) \cong  (\omega_C(\eta))^{\*2} \cong \omega_C^{\*2}$. Thus, 
\[ \omega_C \cong  \omega_C^{\*2}(-p-q-x-y-p'-q'-x'-y'), \]
whence
\begin{equation}
  \label{eq:can}
  \omega_C \cong \O_C(p+q+x+y+p'+q'+x'+y').
\end{equation}
Combining with \eqref{eq:eta}, we find that 
\begin{equation}
  \label{eq:ell2} \L_2 \cong \omega_C(\eta)(-p-q-p'-q') \cong \O_C(p+q+x'+y') \cong \O_C(p'+q'+x+y).
\end{equation}
The relations \eqref{eq:ell1} and \eqref{eq:ell2} tell us that the four nodes of  $\Gamma$  lie on two pairs of fibres of each ruling of $\PP^1 \x \PP^1$, thus showing that  $\Gamma \in \V$. Of course this is all well-defined up to automorphisms of $\PP^1 \x \PP^1$, so we see that the construction associates to $(C,\eta)$ an element in $\V'$, which we define to be the image of $(C,\eta)$ by $f$. 

This defines the map $f$, and in particular shows that $\V$ is nonempty. 
We also note for later use that $\omega_C \cong 2\L_1 \cong 2\L_2$, so that 
 $\D^0_5$ is contained in the locus of Prym curves $(C,\eta)$ carrying a theta-characteristic $\theta$ with
$h^0(\theta)=h^0(\theta+\eta)=2$, which we henceforth call $T_5$. Moreover, via the forgetful map $\R_5 \to \M_5$, the locus $T_5$ maps to the locus of curves with two (complete) autoresidual $g^1_4$s, which we call $\B_5$.

We next prove that $\V$ is irreducible  rational  of dimension $16$. 

For any $X \in \V$, let $\nu:C \to X$ be the normalization; $C$ has
genus $5$. If $z_i$, $i=1,2,3,4$, are the nodes of $X$, then the
complete linear system $|\O_{\PP^1\times\PP^1}(X) \* \I_{z_1}^2 \* \I_{z_2}^2 \* \I_{z_3}^2
\* \I_{z_4}^2|$ has dimension $12$, as expected. Indeed, letting $r$
be its dimension, we clearly have $r \geq 12$; on the other hand, this
complete linear system induces a $g^{r-1}_{16}$ on $C$, whence $r-1
\leq 11$ by Riemann-Roch. It follows that $\V$ is birational to
$\PP^{12} \x (\Sym^2(\PP^1))^2$ (because of the freedom of varying the
four lines in the square configuration), in particular it is
irreducible rational of dimension $12+4=16$.

We now define the inverse of $f$. Given a curve $X \in \V$, let $\L_1$ and $\L_2$ be the line bundles of degree $4$ on $C$ defined by the pullbacks of the two rulings on $\PP^1 \x \PP^1$. By the special position of the $4$ nodes of $X$, the four pairs of points $C$ lying above the four nodes of $X$, say $(p,q)$, $(x,y)$, $(p',q')$ and $(x',y')$, satisfy
\begin{eqnarray*}
  \L_1 & \cong & \O_C(p+q+p'+q')  \cong \O_C(x+y+x'+y'), \\
\L_2 & \cong & \O_C(p+q+x'+y')  \cong \O_C(x+y+p'+q'),
\end{eqnarray*}
in particular, $\eta:=\L_1-\L_2$ is $2$--torsion. Moreover, one can easily verify that  $\omega_C(\eta) \cong \L_1+\L_2$. Thus, the normalization $\nu: C \to X \subset \PP^1 \x \PP^1$ followed by the embedding of $\PP^1 \x \PP^1$ as a quadric in $\PP^3$ induces the Prym--canonical map associated to $\omega_C(\eta)$, so that $(C,\eta)$ has a $4$--nodal Prym--canonical image. One readily checks that this map is  the  inverse of the map $f$ defined above. Thus, we have proved that $\D^0_5$ is irreducible of dimension $\dim \V/(\Aut (\PP^1 \x \PP^1))= 16-6=10$. 

 We have left to prove that the closure of $\D^0_5$ in $\R_5$ coincides with the closure of  $T_5$. 
We proved above that $\D^0_5$ is contained in $T_5$ and that the latter maps, via the finite forgetful map $\R_5 \to \M_5$, to the locus $\B_5$ of curves with two autoresidual $g^1_4$s, which is irreducible of dimension $10$ by \cite[Thm. 2.10]{KLV}. Below we give a direct proof of the latter fact, which also proves that the general member of $\B_5$ carries exactly two $g^1_4$s, equivalently two theta characteristics
$\theta$ and $\theta'$ such that $h^0(\theta)=h^0(\theta')=2$. It will follow that there is an inverse rational map $\B_5 \dashrightarrow T_5$ mapping 
$C$ to $(C,\theta-\theta')$, proving that also $T_5$ is irreducible of dimension $10$. Its closure must therefore coincide with the closure of $\D^0_5$, finishing the proof  of the proposition. 

 So let $C$ be a smooth, irreducible curve of genus 5 and consider its canonical embedding $C \subset \PP^4$. Given  $\xi=|D|$ a (complete) $g^1_4$ on $C$, the divisors in $\xi$ span planes which sweep out a quadric $Q_\xi$ of rank $r<5$. If $\xi$ is not autoresidual, then $Q_\xi$ has rank $r=4$ and it has another 1--dimensional system of planes which cut out on $C$ the divisors of $\xi'=|K_C-D|$. In this case $Q_\xi=Q_{\xi'}$.
Hence $\xi$ is autoresidual if and only if $Q_\xi$ has rank 3, and therefore it possesses only one 1--dimensional family of planes.
This means that the homogeneous ideal of a curve in $\B_5$ in its canonical embedding contains two distinct rank 3 quadrics. Hence the general curve $C$ in $\B_5$ is obtained by intersecting two general rank 3 quadrics in $\PP^4$ with another general quadric. Note that the two rank 3 quadrics cut out a Del Pezzo surface $S$ with 4 nodes, hence $C$ is a general quadric section on $S$. The two autoresidual $g^1_4$ on $C$ are cut out on $C$ by the conics of the two pencils on $S$ with base points two of the nodes.

From this description it follows that $\B_5$ is irreducible, 10--dimensional and that its general member contains precisely two autoresidual $g^1_4$s. Indeed, consider the $\PP^{14}$ parametrizing all quadrics in $\PP^4$. The locus $\mathcal X$ of quadrics of rank $r\leq 3$ is non--degenerate and has dimension 11. The net of quadrics defining a general curve $C$ in $\B_5$ corresponds to a plane in $\PP^{14}$ containing a general secant line to $\mathcal X$ (which, by its generality, contains only two points in $\mathcal X$), and an easy count of parameters shows that these planes clearly fill up a variety of dimension 34. Modding out by the 24--dimensional group of projective transformations of $\PP^4$, we get dimension 10 for $\B_5$. 
\end{proof}

\begin{remark} \label{rem:beau}
Denote, as in the last proof, by $\D_0^5$ the locus of Prym curves $(C,\eta)$ carrying a theta-characteristic $\theta$
with
$h^0(\theta)=h^0(\theta+\eta)=2$. By \cite[Prop. 7.3 and Thm. 7.4]{be} the  locus $\D_0^5$  maps, via the Prym map $\P_5: \R_5 \to \A_4$, to the irreducible divisor
$\theta_{\tiny{\mbox{null}}}$ of principally polarized abelian varieties whose theta-divisor has a singular point at a $2$--torsion point, and moreover the general member of $\P_5(\D_0^5)$  has precisely one ordinary double point, cf. \cite[Pf.~of~Prop.~7.5]{be}. It would be 
interesting to know if $\D_0^5$ dominates $\theta_{\tiny{\mbox{null}}}$. 

By \cite[Prop. 7.3]{be} one knows that the closure of $\P_5^{-1}(\theta_{\tiny{\mbox{null}}})$ is the closure of the locus of Prym curves $(C,\eta)$ carrying a theta-characteristic $\theta$
such that
$h^0(\theta)+h^0(\theta+\eta)$ is even, which {\it properly} contains $\D_0^5$. 
\end{remark}

\begin{remark} \label{rem:enr}
  By contrast, if we  consider the adjoint line bundle of the one   in  Example \ref{exa:enr2},  that is,  $H':=\O_S(2E_1+2E_2)$,  then  by
\cite[Prop. 4.1.2, Thm. 4.7.1, (F) p.~277]{cd} the morphism $\varphi_{H'}$ defined by $|H'|$  is of degree $2$ onto a quartic Del Pezzo surface. In particular, $\varphi_{H'}$  maps any smooth $C \in |H|$ doubly onto  an elliptic quartic curve in $\PP^3$. Hence, the Prym curve $(C,\O_C(K_S))$ belongs to the locus $\R^{0,\mathrm{nb}}_5$  described in Corollary \ref{cor:notbir}.  
\end{remark}

%
%

\clearpage

\def\loccit{\cite{loccit}}
\let\regbar\bar
\let\regtilde\tilde
\def\bar#1{\overline{#1}}
\def\tilde#1{\widetilde{#1}}

\null \vspace{0mm}

\renewcommand{\thesection}{\Alph{section}}
\setcounter{section}{0}
\refstepcounter{section}
\label{addendum}
\setcounter{section}{0}

{\Large\bfseries
\section{Addendum}}

\begin{abstract}
  We give some additional details on the proof of
\cite[Prop.~3.1]{loccit}, which states the irreducibiliy and
unirationality of the locus $\R^0_g$ of Prym curves such that the
Prym-canonical linear system is base point free but does not define an
embedding, for $g\geq 5$.
 \end{abstract}

\vspace{1cm}

We use freely the notation and setup introduced in \loccit. At the
very end of 
the proof of Prop.~3.1, p.~78
(see footnote~\ref{foot-add}), we claim that ``since $\H
\dashrightarrow \M_{g,4}^1$ is dominant, then $\alpha: \G
\dashrightarrow \R_g^0$ is also
dominant''.
The aim of this addendum is to provide a complete proof of this claim.

Let us denote by $\beta$ the map $\H
\dashrightarrow \M_{g,4}^1$ and by $U$ its domain of definition. Note that by \cite[Lemma~2.1]{loccit}, $\R_g^0$  is  a sublocus of 
  $\R_{g,4}^1$.  In order to prove the above mentioned claim, it is
 necessary and sufficient to check that there are no irreducible
components of $\R_g^0$  dominating a sublocus of $\M_{g,4}^1\setminus \beta(U)$. 
This had not been carried out in \loccit, and we provide such a
verification in the present addendum.

\medskip
 We denote by $\bar \M_g$ the moduli space of Deligne and Mumford stable curves of genus $g$.  The map $\beta$ extends to a rational map $\bar \beta: \bar{ \H} \dasharrow \bar\M\vphantom{M}^1_{g,4}$, where $\bar \H$ and $\bar \M\vphantom{M}^1_{g,4}$ are the closures of $\H$ and
$\M_{g,4}^1$ in $|\O_{\PP^2}(n)|$ and $\bar \M_g$ respectively. We denote by $\bar{\mathcal G}$ the closure of $\mathcal G$ in $\bar {\mathcal H}$.

\begin{claim}\label{cl:1} Let $\G'$ be an irreducible family of irreducible plane curves of
degree $n$ and genus $g$, having multiplicity $n-4$ at $p$,    whose
normalization, endowed with the $g^1_4$ cut out by
the lines through $p$, belongs to $\R_g^0 \subset \R_{g,4}^1$ . Then $\G'$ is contained in $\bar{\mathcal G}$.\end{claim}

\begin{proof} We may assume that $\G'$ is maximal for the above properties with respect to inclusion.   Let $C$ be a general member of $\G'$. 
By \cite[Lemma~2.1]{loccit} and the deformation theory for logarithmic
Severi varieties, see \cite[Thm.~1.4]{noteTD},
$C$ has an ordinary $(n-4)$-tuple point at
$p$ and its other singularities are all nodal.
Therefore $C$ is a member of some linear system 
$\L_n \bigl(p^{n-4},p_1^2,\ldots,p_\delta^2,
[q_1,q_2,q'_1,q'_2] \bigr)$, with the possibility that the points
$p_1,\ldots,p_\delta,q_1,q_2,q'_1,q'_2$ be in a special position.

However the dimension of these linear systems
is always the same, no matter the position of the points
$p_1,\ldots,p_\delta,q_1,q_2,q'_1,q'_2$: indeed the corresponding
characteristic series on $C$ is always non-special, as follows from
the computation of its degree, which does not depend on the position
of the points $p_1,\ldots,p_\delta,q_1,q_2,q'_1,q'_2$.
The upshot is that the union of the linear systems 
$\L_n \bigl(p^{n-4},p_1^2,\ldots,p_\delta^2,
[q_1,q_2,q'_1,q'_2] \bigr)$
is irreducible, even if one includes special configurations of points
$p_1,\ldots,p_\delta,q_1,q_2,q'_1,q'_2$ in taking the union.
This union therefore has $\G$ as
a dense subset, hence $C$ belongs to the closure of $\G$,
and the closures of $\G$ and $\G'$ are equal, which proves our claim.
\end{proof}

\begin{claim}\label{cl:2}
There is no
irreducible component 
of $\R_g^0$   dominating a sublocus of $\M_{g,4}^1\setminus \beta(U)$.
\end{claim}

\begin{proof}
We consider 
 $\tilde \H$ a blow-up of
$\bar \H$ such that the map $\beta: \H \dashrightarrow \M^1_{g,4}$
induces a regular map 
$\tilde \beta: \tilde \H \to \bar\M\vphantom{M}^1_{g,4}$
(in particular $\tilde\H \to \bar\H$ factorizes through the
normalization of $\bar\H$).
A point $\tilde C$ of the exceptional locus
of $\tilde\H \to \bar\H$ is the datum of a point $[C] \in \bar\H$
and
an infinitesimal curvilinear arc in $\bar\H$ centered at $[C]$,
and the image of $\tilde C$ in $\bar\M\vphantom{M}^1_{g,4}$
is determined by the stable reduction of the deformation of the
partial normalization of genus $g$ of $C$ corresponding to this curvilinear arc.
Since $\tilde \H$ and $\bar\M\vphantom{M}^1_{g,4}$ are projective and $\beta$ is dominant, 
$\tilde\beta$ is surjective; in particular the closure of  the image of  $\R_g^0$ in $\bar\M_g$ is contained in the image of $\tilde\beta$.
To prove our claim we may thus consider a point
$\tilde C$ in $\tilde{\mathcal H}$ sitting over $[C] \in \bar
{\mathcal H}$, such that $\tilde \beta(\tilde C)$ is
a general point in some irreducible component 
of the closure of  the image of  
$\R_g^0$ in $\bar\M_g$, and show that then $\tilde \beta(\tilde C)$
necessarily belongs to 
the closure of $\beta(\mathcal G)=\alpha(\G)$.

First consider the case that $C$ is irreducible and reduced.
If $C$ has geometric genus $g$, then $[C]$ sits in the domain of
definition of  $ \beta$ in $\bar\H$ (or rather of the rational map $\bar\beta$\, induced by
 $\beta$  on the normalization of $\bar\H$), and, by Claim \ref
{cl:1}, the curve $C$ 
is in the closure of $\mathcal G$. Then $\tilde \beta(\tilde C)=\bar
\beta([C])$ sits in the closure of $\beta(\mathcal G)=\alpha(\G)$, and
we are done. 

If the geometric genus of $C$ is smaller than $g$, then for all
$\tilde C$ in $\tilde{\mathcal H}$ over $[C]$,  the curve $\tilde
\beta(\tilde C)$ has an irreducible component of genus smaller than
$g$, so it sits in the boundary of  $\bar\M_g$, hence it cannot be a
general member of a component of the closure of  the image of  $\mathcal R^0_g$. This ends the proof in the case $C$ integral.

Assume next that $C$ is not integral. Let us first consider the case
in which $C$ has several irreducible components. If none of these is
contracted in the stable reduction, then the stable model of $\tilde
C$ is in the boundary of  $\bar\M_g$ , hence it cannot be a general
member of a component of the closure of  the image of  $\mathcal R^0_g$. 

Next we examine the case in which 
some component $C'$ is contracted in the stable reduction. 
If the restriction to $C'$ of the $g^1_4$ cut out by lines through $p$
is non-trivial, then the $g^1_4$ on the stable model of $\tilde C$ has
base points. If there is only one such base point, then the stable
model cannot lie in $\mathcal R^0_g$ by  \cite[Lemma~2.1]{loccit}. If
there are more base points, then the stable model is hyperelliptic,
hence the number of moduli of such curves is at most $2g-1$. On the other hand the irreducible components of $\R_g^0$ have dimension at least
$2g+1$ as a direct application of  \cite[Lemma~2.1]{loccit} shows.
Therefore in this case, $\tilde\beta(\tilde C)$ cannot be general in a
component of the closure of  the image of  $\mathcal R^0_g$. 

If the restriction to $C'$ of the $g^1_4$ cut out by lines through $p$ is trivial, then $C'$ is a line passing through $p$. In this case a direct computation shows that the number of moduli for $C-C'$ is at most $2g$, hence we conclude as above. 

So the only remaining case is the one in which $C=mC_0$
with $m>1$ and $C_0$ irreducible. We want to check that these curves cannot give rise to an irreducible component
of $\R_g^0$.
First, since the pencil of lines through $p$ cuts out a $g^1_4$ on the
general member of $\H$, one sees that $m$ may only take the values $2$
and $4$.
By explicit computation of the stable reduction,
as for instance in \cite[3.C]{HM},
one sees that
in the former case it gives a curve $C'$  of genus $g$ which is a
double cover of a hyperelliptic curve, and in the latter case it gives
a tetragonal curve for which all ramification points of the $g^1_4$
are triple. 

Suppose first $C'$ is a double cover of a hyperelliptic curve $\Gamma$ of genus $\gamma$. An easy count of parameters shows that the number of moduli on which $C'$ depends is at most $2g+3-2\gamma$ if $\gamma \geq 2$, is at most $2g-2$ if $\gamma=1$, and is at most $2g-1$ if $\gamma=0$. Suppose next that $C'$ has a $g^1_4$ for which all ramification points 
are triple. Then the number of moduli of $C'$ is at most $\frac 23g-1$. 

In any case the curves $C'$ have too few moduli to fill up a
component of $\R_g^0$. This proves our claim.
\end{proof}

\medskip
Claim~\ref{cl:2} above implies that indeed $\alpha: \G \dashrightarrow
\R_g^0$ is dominant. The proof of \cite[Prop.~3.1]{loccit} is thus
complete, and $\R_g^0$ is indeed irreducible and unirational.


\begin{thebibliography}{[10]}

 \bibitem{ac} E.~Arbarello, M.~Cornalba, {\it Footnotes 
to  a Paper of Beniamino Segre}, Math. Ann. 
{\bf 256} (1981), 341--362. 


\bibitem{bd} E.~Ballico, A.~Del Centina,
{\it Ramification  points  of  double  coverings  of
curves and Weierstrass points}, Ann. Mat. Pura Appl.
{\bf 177} (1999), 293--313.



\bibitem{bp} W.~Barth, C.~Peters, {\it Automorphisms of Enriques Surfaces}, Invent. Math. {\bf  73} (1983), 383--412. 


\bibitem{be} A.~Beauville, {\it Prym varieties and the Schottky problem},
Invent. Math. {\bf 41} (1977), 149--196.



\bibitem{bs} M.~Brodmann, P.~Schenzel, {\it Arithmetic properties
of projective varieties of almost minimal degree}, J. Algebraic Geometry
{\bf 16} (2007), 347--400.



 
\bibitem{cdgk} C.~Ciliberto, T.~Dedieu, C.~Galati, A.~L.~Knutsen, {\it Moduli of curves on Enriques surfaces},  arXiv:1902.07142, 2019. 



 \bibitem{cos2}  F.~R.~Cossec, {\it On the Picard group of Enriques surfaces}, Math. Ann. {\bf 271} (1985), 577--600. 


 \bibitem{cd}  F.~R.~Cossec,  I.~V.~Dolgachev,  {\it Enriques surfaces. I.}
 Progress in Mathematics, {\bf 76}. Birkh\"auser Boston, Inc., Boston, MA, 1989.


\bibitem{hm}  J.~Harris,  I.~Morrison,  {\it Moduli of curves}
 Graduate Text in Mathematics, {\bf 187}. Springer, 1998.


 
\bibitem{klvan} A.~L.~Knutsen, A.~F.~Lopez, {\it A sharp vanishing theorem for line bundles on $K3$ or Enriques surfaces}, Proc. Amer. Math. Soc. {\bf  135}, 3(2007) 3495--3498.


\bibitem{KLV} A.~L.~Knutsen, M.~Lelli-Chiesa, A.~Verra, {\it Moduli of non-standard Nikulin surfaces in low genus},  arXiv:1802.01201, to appear in Ann. Scuola Norm. Super. Pisa-Cl. Sci., doi:10.2422/2036-2145.201802\_018. 

\bibitem{LS} H.~Lange, E.~Sernesi, {\it Quadrics containing a Prym-canonical curve}, J. Algebraic Geometry {\bf 5} (1996), 387--399. 

\end{thebibliography}

\begin{thebibliography}{[10]}

\bibitem{loccit}
  C.~Ciliberto, Th.~Dedieu, C~.Galati, A.~L.~Knutsen,
{\it On the locus of Prym curves where the Prym-canonical map is not
  an embedding}, 
Ark.\ Mat.\ {\bf 58} (2020), 71--85.

\bibitem{noteTD} Th.~Dedieu, {\it Geometry of logarithmic Severi
    varieties at a general point},
  \texttt{https://hal.archives-ouvertes.fr/hal-02913705}.

  
\bibitem{HM} J.~Harris, I.~Morrison,
  {\it Moduli of Curves},
  Graduate Texts in Mathematics {\bf 187},
  Springer, 1998.

\end{thebibliography}
\end{document}